\documentclass[reqno]{amsart}

\usepackage{style}

\markboth{M. Boutin, G. Kemper}{Unknown emission times}

\author{Mireille Boutin}
\address{Department of Mathematics, Purdue
  University, 150 N.~University St., West Lafayette, IN, USA 47907}
\email{mboutin@purdue.edu}
\author{Gregor Kemper} \address{Technische Universit\"at M\"unchen,
  Zentrum Mathematik - M11, Boltzmannstr. 3, 85748 Garching, Germany}
\email{kemper@tum.de}

\title{Multilateration and Signal Matching with Unknown Emission Times}

\date{June, 2022}

\subjclass[2010]{51K99, 13P10, 13P25}
\keywords{GPS-problem, multilateration, echo sorting, TDOA disambiguation, shape reconstruction}

\begin{document}

\begin{abstract}
  Assume that a source emits a signal in $3$-dimensional space at an unknown time, which is received by at least~$5$ sensors. In almost all cases the emission time and source position can be worked out uniquely from the knowledge of the times when the sensors receive the signal. The task to do so is the multilateration problem. But when there are several emission events originating from several sources, the received signals must first be matched in order to find the emission times and source positions. In this paper, we propose to use algebraic relations between reception times to achieve this matching. A special case occurs when the signals are actually echoes from a single emission event. In this case, solving the signal matching problem allows one to reconstruct the positions of the reflecting walls. We show that, no matter where the walls are situated, our matching algorithm works correctly for almost all positions of the sensors. 
  
  In the first section of this paper we consider the multilateration problem, which is equivalent to the GPS-problem, and give a simple algebraic solution that applies in all dimensions. 
\end{abstract}

\maketitle

\section*{Introduction} \label{sIntro}%

Positioning is a ubiquitous  problem in engineering. For example, one may want to determine the location of an object such as a vehicle, locate an event such as an earthquake, calibrate an array of devices such as microphones, or draw the map of an environment such as a building. In many scenarios, the objects to be located can emit a signal. In such case, one can use an array of receivers with known geometry to determine the objects location with respect to the position of the receiving array.  Alternatively, the objects may be equipped with a receiver so to be located with an array of sources with known geometry. While both problems are dual to each other, their difficulty and conditioning can vary significantly depending on the specific setup scenario and constraints imposed. In some cases, the underlying mathematical problem may actually be ill-posed.

This paper is concerned with multilateration, which is the task of determining the position of one or more sources emitting a wave signal (e.g., electromagnetic, acoustic, or seismic waves).   More specifically, we are trying to determine the position of sources sending out a signal from measurements of the times when this signal is received by various sensors situated at known positions. We assume that the clocks on the receivers are synchronized together, but not with the clock of the sources. In other words, the time of signal emission is unknown to the sensors, and thus the differences of arrivals (TDOAs) are the only meaningful available information. Therefore one also speaks of pseudo-range multilateration.

The literature on the well-posedness of multilateration problems is sparse. As far as we know, even the well-known GPS positioning problem, which we analyze in Section \ref{sMultilateration},  has not been thoroughly studied.  This corresponds to the problem of determining the position of one emission event (at an unknown time) received by several (synchronized) sensors at known locations.  It turns out that, even in this simple scenario, the TDOAs may not uniquely determine the position of the source.

More generally, we consider the case of several sources emitting undistinguishable signals at unknown times (e.g., earthquakes~[\citenumber{sha2021reliability}] or gunshots).  Clearly the order in which the signals arrive at the sensors can be wildly different from the order in which the signals were emitted. Therefore before feeding the reception times into a multilateration algorithm, a {\em matching} must be performed, identifying those received signals that come from the same emission event with each other. Ideally, this matching process should also discard spurious signals that are registered by just one sensor, i.e., not match such signals to any others. Note that a mistake in the matching process, or accidentally including a spurious signal in a match, will result in determining an emission event (time and position of emission) which never took place.

The problem of matching the sound events produced at a known time was previously studied in 
[\citenumber{DPWLV1}, \citenumber{jager2016room}, \citenumber{el2017time},\citenumber{Boutin:Kemper:2019}]. 
In the case where the times of emission are unknown, the problem is known as  ``TDOA disambiguation." Two sources of TDOA ambiguity are considered in the literature. The first one is the  ``multipath ambiguity" which is caused by the reverberation of the signal on objects in the environment and leads to spurious events. The triangle inequality, a zero cyclic sum condition, and characteristics of the  cross-correlation and autocorrelation of the microphone signals are used to help disambiguate such cases in [\citenumber{kreissig2013fast},\citenumber{zannini2010improved}]. 
The second one is the ``multisource ambiguity" resulting from multiple sources emitting the same signal. A few of the false matches due to this can be ruled out using the triangle inequality.
But, as far as we know, a more rigorous criterion for the case where the times of emission are unknown has not previously been proposed.

In this paper we show that if there are (at least)~$5$ sensors in $3$-dimensional space, then the reception times (more precisely, TDOAs) of a signal coming from a single emission event satisfy a certain algebraic relation. We propose to use this relation to perform the signal matching. So if a selection of signal reception times, one for each sensor, satisfies this relation, then these reception times are accepted as coming from the same event (see \cref{2aDetect}). Moreover, spurious signals registered by just one sensor will almost certainly not satisfy the relation and will therefore not be included in a match.

Of course the "almost" in the previous sentence is an important issue. In fact, there is no way to rule out the possibility that a spurious signal is registered at such an unlucky time that our matching algorithm, or any other algorithm based on the available information, falsely includes this signal in a match. This is true not only for spurious signals but also for signals coming from a real emission event that happened at an unlucky time. In particular, there is no way to position the sensors such that this possibility can be ruled out.

The situation becomes different, however, if the emission events are in fact just echoes from a single event. More precisely, consider an arrangement of flat surfaces ("walls") that reflect a sound signal emitted from an omnidirectional loudspeaker. Assuming the signal is of high frequency, we use the ray acoustics approximation. This means that the signals are virtually emitted from the point given by reflecting the loudspeaker position at the walls, and all (virtual) emissions happen {\em simultaneously}. Now we bring in~$5$ microphones at known positions. These record the echoes of the sound emission and feed them into the matching algorithm. After that, the wall positions can be determined by multilateration. Notice that in contrast to our previous paper~[\citenumber{Boutin:Kemper:2019}], we do not assume that the loudspeaker and the microphones have synchronized clocks and communicate the times of signal emission time. So the common emission time is still unknown, and pseudo-range multilateration is required. In \cref{3tGood} in this paper, we show that in this situation almost all microphone positions are good, in the sense that no false matches can happen. As explained above, this is in contrast to the situation where the (virtual) emission events are not assumed to be simultaneous. The proof of the theorem uses methods from computational commutative algebra. This is something that it has in common with the proof of the main results from~[\citenumber{Boutin:Kemper:2019}]. However, when we designed the proof of \cref{3tGood} we were surprised to find that the difficulties that arose were quite different from those in~[\citenumber{Boutin:Kemper:2019}].

Even though the emphasis of this paper may lie on the matching problem and on multiple wall detection, we also study the pseudo-range multilateration problem itself. In fact, we present a simple algebraic solution algorithm. For simplicity, we formulate this in three dimensions, but it really works for all dimensions $> 1$. We give a self-contained and rigid proof for the validity. It is well-known that with just~$4$ sensors, the pseudo-range multilateration problem usually has~$2$ solutions. We show by examples that this may also happen if there are~$5$ sensors, even if no~$4$ of them are coplanar. This is not a shortcoming of our algorithm: in our example, the available information of TDOAs simply does not allow to disambiguate the solutions. \\

The paper is organized in three sections. The first section introduces the notation and discusses the pseudo-range multilateration problem, giving the solution algorithm in \cref{1tLateration}. In the second section we then turn our attention to the case of multiple emission events. We present and prove the relation that holds between reception times coming from the same event, and derive the matching algorithm (\cref{2aDetect}) from this. The final section deals with the situation of matching echoes from a single sound event. The main result (\cref{3tGood}) from that section says that, loosely speaking, almost all microphone positions are good.

\par{\bf Acknowledgments.} A great part of this work was done during a research stay of the authors at the Mathematical Sciences Research Institute (MSRI) in Berkeley within the 2022 Summer Research in Mathematics program. We thank the MSRI team for creating a uniquely stimulating atmosphere at the institute. The program provided us a with a perfect research environment and a chance to concentrate fully on getting things done.
This work has also benefited  from a research stay of the two authors at the Banff International Research Station for Mathematical Innovation and Discovery (BIRS) under the ``Research in Teams'' program. We would like to thank BIRS for its hospitality and for providing an optimal working environment. We also thank Stefan Wetkge and Timm Oertel for stimulating discussions.

\section{Multilateration and the GPS-problem}
\label{sMultilateration}

\newcommand{\ve}[1]{\mathbf{#1}}%
\newcommand{\vertex}[1][black]{\node[#1,circle, draw, outer sep=3pt,
  inner sep=0pt, minimum size=3pt,fill]}
\newcommand{\diag}{\operatorname{diag}}

In this section we look at the pseudo-range multilateration problem, which is
equivalent to the GPS-problem (see below), and present two simple algebraic (``direct", as opposed to iterative)
solutions, see \cref{1tLateration}. This topic has received
considerable interest in the literature (see \mycite{Bancroft:1985},
\mycite{Krause:1987}, \mycite{Chaffee:Abel:1994}, \mycite{li2010design}, \mycite{lundberg2001alternative}, and
\mycite{Beck:Pan:2012}). 
But our results are general and appear to be new.

Our primary interest is in the following situation: A source at an
unknown position~$\ve x \in \RR^3$ emits a signal, in our applications
usually by sound, at an unknown time~$t$. (In fact, everything we are
about to say can easily be adapted to $\RR^n$ with $n \ge 2$, but not
to $\RR^1$, see \cref{1rDim}.)  There are~$m$ sensors at known
positions $\ve a_1 \upto \ve a_m \in \RR^3$. They receive the signal
at times~$t_1 \upto t_m$. We choose the unit of time such that the
signal propagation speed becomes~$1$. So we have
\begin{equation} \label{1eqBasic}%
  \lVert\ve a_i - \ve x\rVert = t_i - t \qquad (i = 1 \upto m)
\end{equation}
The task now is to work out the position~$\ve x$ and the emission
time~$t$. The very same equations arise if there are~$m$ sources at
known positions~$\ve a_i$ emitting signals at known times, which are
then received by a device at an unknown position~$\ve x$. In this case
the~$t_i$ are the differences between the reception times {\em
  according to the clock on the device} and the emission times
according to the (near-perfect) clocks on the sources, and~$t$ is the
(unknown) bias between the clock on the receiver and the clocks on the
sources. This is the GPS-problem.

We will work with the slightly weaker equations
\begin{equation} \label{1eqAbsolute}%
  \lVert\ve a_i - \ve x\rVert = |t_i - t| \qquad (i = 1 \upto m).
\end{equation}
Writing
$L := \left(\begin{smallmatrix} -1 \\ & 1 \\ && 1 \\ &&&
    1 \end{smallmatrix}\right)$,
$\tilde{\ve a_i} := \left(\begin{smallmatrix} t_i \\ \ve
    a_i\end{smallmatrix}\right)$ and
$\tilde{\ve x} := \left(\begin{smallmatrix} t \\ \ve
    x\end{smallmatrix}\right)$, we have
\begin{multline*}
  \lVert\ve a_i - \ve x\rVert^2 - (t_i - t)^2 = (\tilde{\ve a_i} -
  \tilde{\ve x})^T \cdot L \cdot (\tilde{\ve a_i} - \tilde{\ve x}) =
  \tilde{\ve a_i}^T L \tilde{\ve a_i} - 2 \tilde{\ve a_i}^T L \tilde{\ve
    x} + \tilde{\ve x}^T L \tilde{\ve x} = \\
  \lVert\ve a_i\rVert^2 - t_i^2 + 2 t_i t - 2 \ve a_i^T \ve x +
  \lVert\ve x\rVert^2 - t^2,
\end{multline*}
so~\eqref{1eqAbsolute} is equivalent to
\begin{equation} \label{1eqEquiv}%
  - 2 t_i t + 2 \ve a_i^T \ve x - \lVert\ve x\rVert^2 + t^2 =
  \lVert\ve a_i\rVert^2 - t_i^2 \qquad (i = 1 \upto m).
\end{equation}
We form the matrix
\begin{equation} \label{1eqA}%
  A :=
  \begin{pmatrix}
    - 2 t_1 & 2 \ve a_1^T & -1 \\
    \vdots & \vdots & \vdots \\
    -2 t_m & 2 \ve a_m^T & -1
  \end{pmatrix} \in \RR^{m \times 5},
\end{equation}
which contains only known quantities. With this, \eqref{1eqEquiv} can
be expressed as a system of linear equations for the unknown
quantities:
\begin{equation} \label{1eqMatrix}%
  A \cdot
  \begin{pmatrix}
    t \\
    \ve x \\
    \lVert\ve x\rVert^2 - t^2
  \end{pmatrix} =
  \begin{pmatrix}
    \lVert\ve a_1\rVert^2 - t_1^2 \\
    \vdots \\
    \lVert\ve a_m\rVert^2 - t_m^2
  \end{pmatrix}.
\end{equation}

Then this has the same solutions as~\eqref{1eqAbsolute}. Now we make the
assumption that $A$ has rank~$5$, so~\eqref{1eqMatrix} has a unique
solution. (The existence of a solution follows from the fact that the
point~$\ve x$ and time~$t$ of emission exist.) In the case $m = 5$ we
can then simply invert~$A$. If $m > 5$, we could delete all but~$5$
linearly independent equations from~\eqref{1eqMatrix}, which would
give an algebraically equivalent system with invertible matrix. But in
the real world there are inaccurate measurements, so it should be
wiser to apply the Moore-Penrose inverse
$(A^T A)^{-1} A^T \in \RR^{5 \times m}$. Specifically, if
$B \in \RR^{4 \times m}$ is obtained by deleting the last row from
$(A^T A)^{-1} A^T$, then
\begin{equation} \label{1eqB}%
  \begin{pmatrix}
    t \\
    \ve x
  \end{pmatrix} = B \cdot \begin{pmatrix}
    \lVert\ve a_1\rVert^2 - t_1^2 \\
    \vdots \\
    \lVert\ve a_m\rVert^2 - t_m^2
  \end{pmatrix}.
\end{equation}
So we have obtained a unique solution for the emission time and
place. This is our first solution, which is available only if $A$ has
rank~$5$. In \cref{1pAInvertible} we will say something about how
likely this it.

But first we will consider the case that $A$ has rank~$< 5$, and
derive our second solution. What we do assume is that the~$\ve a_i$
are not coplanar. This makes sense, since if the~$\ve a_i$ all lay in
the same plane, then even with a known emission time~$t$ the
location~$\ve x$ of the source could not be distinguished from the
point obtained by reflecting~$\ve x$ at this plane. Our assumption
amounts to saying that the matrix
\begin{equation} \label{eqTildeAdef}%
  \tilde A :=
  \begin{pmatrix}
    2 \ve a_1^T & -1 \\
    \vdots & \vdots \\
    2 \ve a_m^T & -1
  \end{pmatrix} \in \RR^{m \times 4}
\end{equation}
has rank~$4$ (see Assumption~A in~[\citenumber{Beck:Pan:2012}]), so in
particular we need $m \ge 4$. The Moore-Penrose inverse is
$\tilde B := (\tilde A^T \tilde A)^{-1} \tilde A^T \in \RR^{4 \times
  m}$, so $\tilde B \tilde A = I_4$. Now~\eqref{1eqEquiv} can be
restated as
\begin{equation} \label{1eqTildeA}%
  \tilde A \cdot 
  \begin{pmatrix}
    \ve x \\
    \lVert\ve x\rVert^2 - t^2
  \end{pmatrix} =
  2 t \begin{pmatrix}
    t_1 \\
    \vdots \\
    t_m
  \end{pmatrix} + \begin{pmatrix}
    \lVert\ve a_1\rVert^2 - t_1^2 \\
    \vdots \\
    \lVert\ve a_m\rVert^2 - t_m^2
  \end{pmatrix},
\end{equation}
and multiplying by $\tilde B$ yields
\begin{equation} \label{1eqUV}%
  \begin{pmatrix}
    \ve x \\
    \lVert\ve x\rVert^2 - t^2
  \end{pmatrix} = t \cdot
  \begin{pmatrix}
    \ve u \\
    \alpha
  \end{pmatrix} +
  \begin{pmatrix}
    \ve v \\
    \beta
  \end{pmatrix}, \ \text{where} \
  \begin{pmatrix}
    \ve u \\
    \alpha
  \end{pmatrix} := 2 \tilde B
  \begin{pmatrix}
    t_1 \\
    \vdots \\
    t_m
  \end{pmatrix} \ \text{and} \
  \begin{pmatrix}
    \ve v \\
    \beta
  \end{pmatrix} := \tilde B
  \begin{pmatrix}
    \lVert\ve a_1\rVert^2 - t_1^2 \\
    \vdots \\
    \lVert\ve a_m\rVert^2 - t_m^2
  \end{pmatrix}.
\end{equation}
Extracting components, we obtain the equivalent equations
\begin{equation} \label{1eqRank4}%
  \ve x = t \ve u + \ve v \quad \text{and} \quad \bigl(\lVert\ve
  u\rVert^2-1\bigr) t^2 + \bigl(2 \ve u^T \ve v-\alpha\bigr) t +
  \lVert\ve v\rVert^2 - \beta = 0.
\end{equation}
Observe that~$\ve u$, $\ve v$, $\alpha$, and~$\beta$ are all derived
from known quantities, so \eqref{1eqRank4} can be resolved. In the
following theorem, part~\ref{1tLaterationA} summarizes our result in
the rank-$5$ case, \ref{1tLaterationB} tells us that~\eqref{1eqRank4}
is actually equivalent to~\eqref{1eqAbsolute}, and~\ref{1tLaterationC}
says that the quadratic equation in~\eqref{1eqRank4} never
degenerates.

\begin{theorem} \label{1tLateration}%
  In the above situation and with the notation introduced, we have:
  \begin{enumerate}[label=(\alph*)]
  \item \label{1tLaterationA} If the matrix $A$ from~\eqref{1eqA} has
    rank~$5$, then~\eqref{1eqAbsolute} has a unique solution
    $\left(\begin{smallmatrix} t \\ \ve x\end{smallmatrix}\right)$
    given by~\eqref{1eqB}.
  \item \label{1tLaterationB} Assume that $\rank(A) \le 4$ and that
    the~$\ve a_i$ are not coplanar, so in particular $m \ge 4$. Then
    the equations~\eqref{1eqAbsolute} are satisfied by the same
    $\left(\begin{smallmatrix} t \\ \ve x\end{smallmatrix}\right)$
    as~\eqref{1eqRank4}.
  \item \label{1tLaterationC} Moreover, the coefficients of~$t^2$
    and~$t$ in the quadratic equation in~\eqref{1eqRank4} are not both
    zero, so~\eqref{1eqRank4} has one or, more likely, two solutions.
  \end{enumerate}
\end{theorem}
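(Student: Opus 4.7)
For part (a), the work preceding the theorem already shows that~\eqref{1eqAbsolute} is equivalent to the linear system~\eqref{1eqMatrix} in the unknown column vector with entries $t$, $\ve x$, and $\lVert\ve x\rVert^2 - t^2$. If $\rank(A) = 5$, then $A$ has a left inverse, so~\eqref{1eqMatrix} has at most one solution; since the actual emission event supplies one, it has exactly one. The Moore--Penrose pseudoinverse $(A^T A)^{-1} A^T$ returns this unique solution, and deleting its last row yields~\eqref{1eqB}.

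Part (b) reduces to establishing the equivalence of~\eqref{1eqTildeA} and~\eqref{1eqUV}, since~\eqref{1eqAbsolute} is equivalent to~\eqref{1eqTildeA} by direct rearrangement, and~\eqref{1eqUV} yields~\eqref{1eqRank4} by substituting $\ve x = t\ve u + \ve v$ into $\lVert\ve x\rVert^2 - t^2$. The forward direction is immediate from $\tilde B\tilde A = I_4$. For the converse I must verify that $\tilde A$ maps the two coefficient vectors of~\eqref{1eqUV} back to the corresponding right-hand vectors of~\eqref{1eqTildeA}; equivalently, both $\ve\tau := (t_1,\ldots,t_m)^T$ and the right-hand side~$\ve c$ of~\eqref{1eqMatrix} must lie in the column space of~$\tilde A$. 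For~$\ve\tau$, the hypothesis $\rank(A) \le 4 = \rank(\tilde A)$, applied to the block structure $A = (-2\ve\tau \mid \tilde A)$, forces $\ve\tau \in \operatorname{col}(\tilde A)$. For~$\ve c$, the existence of a physical solution to~\eqref{1eqTildeA} lets me write $\ve c = \tilde A \ve y - 2t\ve\tau$ for some $\ve y$ and $t$, and this lies in $\operatorname{col}(\tilde A)$ by what was just shown.

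For part (c), I would argue by contradiction. Suppose $\lVert\ve u\rVert^2 = 1$ and $2\ve u^T\ve v = \alpha$. Then the quadratic in~\eqref{1eqRank4} collapses to the constant $\lVert\ve v\rVert^2 - \beta$, and the existence of a solution (from part (b)) forces $\beta = \lVert\ve v\rVert^2$. But then~\eqref{1eqUV} is satisfied by every $t \in \RR$ with $\ve x = t\ve u + \ve v$, so the same is true of~\eqref{1eqAbsolute}. Expanding $\lVert\ve a_i - t\ve u - \ve v\rVert^2 = (t_i - t)^2$ as a polynomial identity in $t$ and matching the coefficients of $t^0$ and $t^1$ yields $\lVert\ve a_i - \ve v\rVert = |t_i|$ and $(\ve a_i - \ve v)^T\ve u = t_i$ for each~$i$. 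Since $\lVert\ve u\rVert = 1$, equality in Cauchy--Schwarz forces $\ve a_i - \ve v$ to be a scalar multiple of $\ve u$; hence every sensor lies on the single line $\ve v + \RR\ve u$, contradicting the non-coplanarity hypothesis. The main obstacle I anticipate is the converse in part (b): the rank assumption alone does not place $\ve c$ in $\operatorname{col}(\tilde A)$, and one must combine the block structure of~$A$ with the assumed existence of a physical emission event to secure both required inclusions simultaneously.
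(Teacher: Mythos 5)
Your proposal is correct, with part (a) identical to the paper's argument and parts (b) and (c) taking a genuinely cleaner route. For (b), the paper renumbers so that $\ve a_1, \ldots, \ve a_4$ are not coplanar, constructs an explicit matrix $C$ with $A = C A_4$ and $\tilde A = C \tilde A_4$, derives the identities \eqref{1eqT} and \eqref{1eqMP} (in particular $\tilde A \tilde B C = C$), and pushes a solution of \eqref{1eqRank4} through these to recover \eqref{1eqTildeA}. Your column-space formulation captures the same mechanism without coordinates: $\rank(A) \le 4 = \rank(\tilde A)$ together with the block structure $A = (-2\ve t \mid \tilde A)$, where $\ve t = (t_1, \ldots, t_m)^T$, gives $\operatorname{col}(A) = \operatorname{col}(\tilde A)$ and hence $\ve t \in \operatorname{col}(\tilde A)$; the physical emission event places the right-hand side of \eqref{1eqMatrix} there as well; and $\tilde B \tilde A = I_4$ makes $\tilde A \tilde B$ act as the identity on $\operatorname{col}(\tilde A)$, which is exactly what the paper's $C$-identities encode. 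For (c), the paper translates coordinates so that a $t = 0$ solution sits at the origin (forcing $\ve v = \ve 0$ and $\beta = 0$), then runs a matrix computation through \eqref{1eqT} and \eqref{1eqMP} to obtain $\ve a_i^T \ve u = t_i = \varepsilon_i \lVert\ve a_i\rVert$ before invoking Cauchy--Schwarz; you instead observe that full degeneration makes $(t, t\ve u + \ve v)$ a solution of \eqref{1eqAbsolute} for \emph{every} $t$ and match coefficients of the resulting polynomial identity in $t$, obtaining $(\ve a_i - \ve v)^T \ve u = t_i$ and $\lVert\ve a_i - \ve v\rVert = |t_i|$ with no translation trick and no sign bookkeeping, and concluding that all $m$ sensors (not merely four) lie on the line $\ve v + \RR \ve u$. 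What the paper's heavier machinery buys is reuse --- the $C$-identities built for (b) do double duty in its proof of (c); what your route buys is brevity and a basis-free argument in which the two pivots stand out clearly: the rank hypothesis forces the time column into the column space determined by the sensor geometry, and degeneracy of the quadratic forces equality in Cauchy--Schwarz, hence collinearity, contradicting non-coplanarity. One small remark: your appeal to part (b) within (c) is legitimate, since (c) inherits (b)'s hypotheses, and the only direction you actually need is the converse you proved, namely that \eqref{1eqUV} implies \eqref{1eqAbsolute}.
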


\begin{proof}
  Part~\ref{1tLaterationA} has already been shown, so we turn our
  attention to~\ref{1tLaterationB}. By renumbering we may assume that
  $\ve a_1 \upto \ve a_4$ are not coplanar. Then $\tilde A_4$, the
  upper $4 \times 4$-part of $\tilde A$, is invertible, and $A_4$, the
  upper $4 \times 5$-part of $A$, has rank~$4$. So we have a matrix
  $C \in \RR^{m \times 4}$,  with upper $4 \times 4$-part the identity matrix, such
  that $A = C \cdot A_4$ and $\tilde A = C \cdot \tilde
  A_4$. Extracting the first column from the first equation gives
  \begin{equation} \label{1eqT}%
    \begin{pmatrix}
      t_1 \\
      \vdots \\
      t_m
    \end{pmatrix} = C \cdot
    \begin{pmatrix}
      t_1 \\
      \vdots \\
      t_4
    \end{pmatrix}.
  \end{equation}
  Since~\eqref{1eqMatrix} has a solution $\left(\begin{smallmatrix} t \\
      \ve x \\ \lVert\ve x\rVert^2 - t^2\end{smallmatrix}\right)$, we
  obtain
  \begin{equation} \label{1eqC2}%
    \begin{pmatrix}
      \lVert\ve a_1\rVert^2 - t_1^2 \\
      \vdots \\
      \lVert\ve a_m\rVert^2 - t_m^2
    \end{pmatrix} = A
    \begin{pmatrix}
      t \\
      \ve x \\
      \lVert\ve x\rVert^2 - t^2
    \end{pmatrix} = C A_4
    \begin{pmatrix}
      t \\
      \ve x \\
      \lVert\ve x\rVert^2 - t^2
    \end{pmatrix} = C \cdot
    \begin{pmatrix}
      \lVert\ve a_1\rVert^2 - t_1^2 \\
      \vdots \\
      \lVert\ve a_4\rVert^2 - t_4^2
    \end{pmatrix}.
  \end{equation}
  We have
  \[
    \tilde B = (\tilde A^T A)^{-1} \tilde A^T = \bigl(\tilde A_4^T
    C^T C \tilde A_4\bigr)^{-1} \tilde A_4^T C^T = \tilde A_4^{-1}
    (C^T C)^{-1} C^T,
  \]
  so
  \begin{equation} \label{1eqMP}%
    \tilde A_4 \tilde B C = I_4 \quad \text{and} \quad \tilde A \tilde
    B C = C \tilde A_4 \tilde B C = C.
  \end{equation}

  To prove~\ref{1tLaterationB}, let
  $\left(\begin{smallmatrix} t \\ \ve x\end{smallmatrix}\right)$
  satisfy~\eqref{1eqRank4}. Then it satisfies~\eqref{1eqUV}, so
  \[
    \begin{pmatrix}
      \ve x \\ \lVert\ve x\rVert^2 - t^2
    \end{pmatrix} = 2 t \tilde B
    \begin{pmatrix}
      t_1 \\
      \vdots \\
      t_m
    \end{pmatrix} + \tilde B
    \begin{pmatrix}
      \lVert\ve a_1\rVert^2 - t_1^2 \\
      \vdots \\
      \lVert\ve a_m\rVert^2 - t_m^2
    \end{pmatrix} =
    2 t \tilde B C
    \begin{pmatrix}
      t_1 \\
      \vdots \\
      t_4
    \end{pmatrix} + \tilde B C
    \begin{pmatrix}
      \lVert\ve a_1\rVert^2 - t_1^2 \\
      \vdots \\
      \lVert\ve a_4\rVert^2 - t_4^2
    \end{pmatrix},
  \]
  where we used~\eqref{1eqT} and~\eqref{1eqC2}. With~\eqref{1eqMP}
  this implies
  \[
    \tilde A
    \begin{pmatrix}
      \ve x \\ \lVert\ve x\rVert^2 - t^2
    \end{pmatrix} = 2 t C
    \begin{pmatrix}
      t_1 \\
      \vdots \\
      t_4
    \end{pmatrix} + C
    \begin{pmatrix}
      \lVert\ve a_1\rVert^2 - t_1^2 \\
      \vdots \\
      \lVert\ve a_4\rVert^2 - t_4^2
    \end{pmatrix} = 2 t
    \begin{pmatrix}
      t_1 \\
      \vdots \\
      t_m
    \end{pmatrix} + \begin{pmatrix}
      \lVert\ve a_1\rVert^2 - t_1^2 \\
      \vdots \\
      \lVert\ve a_m\rVert^2 - t_m^2
    \end{pmatrix}.
  \]
  Thus~\eqref{1eqTildeA} holds, which is equivalent
  to~\eqref{1eqAbsolute}. So every solution of~\eqref{1eqRank4}
  satisfies~\eqref{1eqAbsolute}, and the converse is true
  since~\eqref{1eqRank4} was derived from~\eqref{1eqAbsolute}.

  To prove part~\ref{1tLaterationC}, assume that the coefficients
  of~$t^2$ and~$t$ in the quadratic equation in~\eqref{1eqRank4} are
  both zero. Since~\eqref{1eqAbsolute} has a solution, this means that
  also the constant coefficient vanishes, so there is a solution
  of~\eqref{1eqRank4}, and therefore of~\eqref{1eqAbsolute}, for
  every~$t$. In particular, we have $\ve x_0 \in \RR^3$ such that
  $\left(\begin{smallmatrix} 0 \\ \ve x_0\end{smallmatrix}\right)$
  satisfies~\eqref{1eqAbsolute}. Substituting each~$\ve a_i$
  by~$\ve a_i - \ve x_0$ preserves the noncoplanarity of
  $\ve a_1 \upto \ve a_4$ and yields new systems~\eqref{1eqAbsolute}
  and~\eqref{1eqRank4} of equations. For each solution
  $\left(\begin{smallmatrix} t \\ \ve x\end{smallmatrix}\right)$ of
  the old system~\eqref{1eqAbsolute}, the new one now has the solution
  $\left(\begin{smallmatrix} t \\ \ve x - \ve
      x_0\end{smallmatrix}\right)$, so the new quadratic equation
  in~\eqref{1eqRank4} still has infinitely many solutions. Moreover,
  $\left(\begin{smallmatrix} 0 \\ \ve 0\end{smallmatrix}\right)$ is a
  solution of~\eqref{1eqAbsolute}, so $\lVert\ve a_i\rVert =
  |t_i|$. Hence~\eqref{1eqUV} implies $\ve v = \ve 0$ and $\beta =
  0$. The vanishing of the coefficient of~$t$ in the quadratic equation therefore means $\alpha = 0$,
  therefore means $\alpha = 0$, so
  \begin{multline*}
    2 \begin{pmatrix}
      \ve a_1^T \ve u \\
      \vdots \\
      \ve a_4^T \ve u
    \end{pmatrix} =
    \begin{pmatrix}
      2 \ve a_1^T & -1 \\
      \vdots & \vdots \\
      2 \ve a_4^T & -1
    \end{pmatrix} \cdot
    \begin{pmatrix}
      \ve u \\
      \alpha
    \end{pmatrix} = \tilde A_4
    \begin{pmatrix}
      \ve u \\
      \alpha
    \end{pmatrix} \underset{\eqref{1eqUV}}{=} \\
    2 \tilde A_4 \tilde B
    \begin{pmatrix}
      t_1 \\
      \vdots \\
      t_m
    \end{pmatrix} \underset{\eqref{1eqT}}{=} 2 \tilde A_4 \tilde B C
    \begin{pmatrix}
      t_1 \\
      \vdots \\
      t_4
    \end{pmatrix} \underset{\eqref{1eqMP}}{=} 2
    \begin{pmatrix}
      t_1 \\
      \vdots \\
      t_4
    \end{pmatrix} = 2
    \begin{pmatrix}
      \varepsilon_1 \lVert\ve a_1\rVert \\
      \vdots \\
      \varepsilon_4 \lVert\ve a_4\rVert
    \end{pmatrix}
  \end{multline*}
  for some $\varepsilon_i \in \{\pm 1\}$. The vanishing of the $t^2$-coefficient
  in~\eqref{1eqRank4} means that $\lVert u \rVert = 1$, so
  \[
    |\ve a_i^T \ve u| = \lVert\ve a_i\rVert = \lVert\ve a_i\rVert
    \cdot \lVert u \rVert \quad (i = 1 \upto 4).
  \]
  Thus the Cauchy–Schwarz inequality is actually an equality, implying
  that~$\ve a_i$ and~$\ve u$ are linearly dependent. This shows that
  $\ve a_1 \upto \ve a_4$ are collinear, contradicting the hypothesis
  that they are not coplanar.
\end{proof}

\begin{rem} \label{1rDim}%
  Everything in this section carries over directly to $n$-dimensional
  space, with $n > 1$. Just replace every instance
  of~$3$, $4$, and~$5$ by~$n$, $n+1$, and~$n+2$, and replace
  ``coplanar'' by ``contained in a common $(n-1)$-dimensional affine
  subspace.''

  What happens for $n = 1$? Everything works, except for the very last
  sentence in the proof of \cref{1tLateration}: In dimension~$1$,
  collinearity does {\em not} contradict being noncopunctual (i.e.,
  not being the same point). But this makes everything break down. In
  fact, the mathematics bears out what has always been clear about the
  one-dimensional case: If the source lies on the same side of every
  sensor, then there is no way to find out its position from the time
  differences of signal arrivals; and indeed in this case the
  coefficients of the quadratic equation in~\eqref{1eqRank4} are
  all~$0$, and the matrix $A \in \RR^{m \times 3}$ has rank~$2$.

  Even though the one-dimensional case is not interesting in itself,
  it shows, as do~\ref{1exLateration5} and~\ref{1exLateration6} in
  \cref{1exLateration} below, that the effort of proving
  \cref{1tLateration}\ref{1tLaterationC} was not irrelevant: this is
  not a truism.
\end{rem}

Up to now, we have worked with the equations~\eqref{1eqAbsolute}, and
seen that they may have two solutions. But according
to~\eqref{1eqBasic} (which expresses that signals arrive after having
been sent) we have $t_i \ge t$. If one of the solutions does not
satisfy this, it is spurious and can be discarded. But if both do, the
given data do not uniquely determine~$\ve x$ and~$t$. The first two of
the following examples show that this can actually happen. The third
example has a spurious solution, and the others exemplify some special
cases.

\begin{ex} \label{1exLateration}%
  In the following, we chose the coordinates in a way to keep all
  numbers rational, so the examples are quite
  Pythagorean-triple-prone.%
  \newcommand{\comment}[1]{}
  \begin{enumerate}[label=(\arabic*)]
  \item \label{1exLateration1} Of the five sensor positions
    \[
      \ve a_1 = \begin{pmatrix} 3 \\ 4 \\ 0 \end{pmatrix}, \
      \ve a_2 = \begin{pmatrix} -2 \\ -2 \\ 1 \end{pmatrix}, \
      \ve a_3 = \begin{pmatrix} -1 \\ 0 \\ 0 \end{pmatrix}, \
      \ve a_4 = \frac{2}{21} \begin{pmatrix} 0 \\ -24 \\ 7 \end{pmatrix}, \
      \ve a_5 = \frac{1}{21} \begin{pmatrix} 0 \\ 76 \\ 0 \end{pmatrix},
    \]
    no four are coplanar.\ %
    \comment{
      Magma-code:

      n:=3;
      ai:=[[3,4,0],[-2,-2,1],[-1,0,0],[0,-16/7,2/3],[0,76/21,0]];
      Atilde:=Matrix(n+2,n+1,[[2*ai[i][j]: j in [1..n]] cat [-1]: i in
      [1..n+2]]);
      [Determinant(Submatrix(Atilde,Exclude([1..n+2],i),[1..n+1])): i
      in [1..n+2]];
      // All nonzero
    }%
    If we assume that the source is at the origin
    $\ve x = \ve 0 = (0,0,0)^T$ and emits a signal at time $t = 0$,
    then the $i$-th sensor will receive this at time
    $t_i = \lVert\ve a_i\rVert$. We have
    $(t_1,t_2,t_3,t_4,t_5) = (5,3,1,50/21,76/21)$.\ %
    We chose the~$\ve a_i$ in such a way that the affine relation
    $2 \ve a_1 + 2 \ve a_2 + 2 \ve a_3 - 3 \ve a_4 - 3 \ve a_5 = \ve 0$ is
    also satisfied by their lengths $\lVert\ve a_i\rVert$. So the
    matrix $A \in \RR^{5 \times 5}$ is not invertible.\ %
    \comment{
      Magma-code:
      
      ti:=[5,3,1,50/21,76/21];
      [ti[i]^2 - &+[ai[i][j]^2: j in [1..n]]: i in [1..n+2]];
      // all zero, so these really are the lengths
      ti:=Matrix(n+2,1,ti);
      A:=HorizontalJoin(-2*ti,Atilde);
      Determinant(A);
      // 0
    }%
    The computations according to~\eqref{1eqUV} yield
    \[
      \ve u = \frac{1}{55} \begin{pmatrix} 21 \\ 34 \\
        199 \end{pmatrix}, \ \ve v = \ve 0, \ \alpha =
      -\frac{152}{55}, \ \text{and} \ \beta = 0,
    \]
    so \eqref{1eqRank4} becomes
    \[
      \ve x = t \ve u \quad \text{and} \quad \frac{38173}{3025} t^2 +
      \frac{152}{55} t = 0,
    \]
    which is solved by
    \[
      t = 0, \ \ve x = \ve 0, \quad \text{and} \quad t' =
      \frac{-8360}{38173}, \ \ve x' =
      \frac{-152}{38173} \begin{pmatrix} 21 \\ 34 \\
        199 \end{pmatrix}.
    \]
    \comment{
      Magma-code:
      
      Btilde := (Transpose(Atilde)*Atilde)^(-1)*Transpose(Atilde);
      ua:=2*Btilde*ti;
      u:=Submatrix(ua,[1..n],[1]);
      alpha:=ua[n+1][1];
      u,alpha;
      // [ 21/55]
      // [  34/55]
      // [199/55]
      // -152/55
      u21:=&+[u[i,1]^2: i in [1..n]] - 1;
      u21;
      // 38173/3025
      ts:=alpha/u21;
      ts;
      // -8360/38173
      xs:=ts*u;
      -38173/152*xs;
      // [ 21]
      // [ 34]
      // [199]
    }%
    Now it can easily be verified directly that~\eqref{1eqBasic} holds
    with~$t$ and~$\ve x$ replaced by~$t'$ and~$\ve x'$. In other
    words, had the signal been emitted from the position~$\ve x'$ at
    time~$t'$ rather than from $\ve x = \ve 0$ at $t = 0$, it would
    have arrived at the exact same times~$t_i$ at the sensors. So even
    if the system~\eqref{1eqBasic} is overdetermined ($5$ equations
    for~$4$ unknowns) and the sensor positions are not chosen in an
    obviously clumsy way, it may still be impossible to uniquely
    determine the source position.%
    \comment{
      Magma-code:
      
      [ti[i,1] gt ts: i in [1..n+2]];
      // all true, so the t_i - t' are nonnegative
      [&+[(ai[i][j]-xs[j,1])^2: j in [1..n]] eq (ti[i,1] - ts)^2: i in
        [1..n+2]];
      // Again all true
    }
  \item \label{1exLateration2} A simpler example of the same type can
    be constructed in~$2$ dimensions. Take the sensor positions
    \[
      \ve a_1 = \begin{pmatrix} 9 \\ 12 \end{pmatrix}, \
      \ve a_2 = \begin{pmatrix} 9 \\ -12 \end{pmatrix}, \
      \ve a_3 = \begin{pmatrix} 10 \\ -24 \end{pmatrix}, \
      \ve a_4 = \begin{pmatrix} 10 \\ 24 \end{pmatrix}.
    \]
    The source is again located at $\ve x = \ve 0$ and emits its
    signal at time $t = 0$. It is then received at times
    $(t_1,t_2,t_3,t_4) = (15,15,26,26)$. The same computation as above
    (but with smaller numbers) shows that emission time and place
    \[
      t' = \frac{7}{5} \quad \text{and} \quad \ve x' =
      \frac{77}{5} \begin{pmatrix} 1 \\ 0 \end{pmatrix}
    \]
    would have led to the exact same arrival times~$t_i$.%
    \comment{
      Magma-code:

      n:=2;
      ai:=[[9,12],[9,-12],[10,-24],[10,24]];
      Atilde:=Matrix(n+2,n+1,[[Rationals()!2*ai[i][j]: j in [1..n]]
      cat [-1]: i in [1..n+2]]);
      [Determinant(Submatrix(Atilde,Exclude([1..n+2],i),[1..n+1])): i
      in [1..n+2]];
      // All nonzero
      ti:=[Rationals()!15,15,26,26];
      [ti[i]^2 - &+[ai[i][j]^2: j in [1..n]]: i in [1..n+2]];
      // all zero, so these really are the lengths
      ti:=Matrix(n+2,1,ti);
      A:=HorizontalJoin(-2*ti,Atilde);
      Determinant(A);
      // 0
      Btilde := (Transpose(Atilde)*Atilde)^(-1)*Transpose(Atilde);
      ua:=2*Btilde*ti;
      u:=Submatrix(ua,[1..n],[1]);
      alpha:=ua[n+1][1];
      u,alpha;
      // [11]
      // [0]
      // [199/55]
      // 168
      u21:=&+[u[i,1]^2: i in [1..n]] - 1;
      u21;
      // 120
      ts:=alpha/u21;
      ts;
      // 7/5
      xs:=ts*u;
      xs;
      // [77/5]
      // [   0]
      [ti[i,1] gt ts: i in [1..n+2]];
      // all true, so the t_i - t' are nonnegative
      [&+[(ai[i][j]-xs[j,1])^2: j in [1..n]] eq (ti[i,1] - ts)^2: i in
        [1..n+2]];
      // Again all true
    }%
  \item \label{1exLateration3} An example with a spurious solution
    (again in dimension~$2$) is given by
    \[
      \ve a_1 = \begin{pmatrix} 4 \\ 0 \end{pmatrix}, \
      \ve a_2 = \begin{pmatrix} -3 \\ 4 \end{pmatrix}, \
      \ve a_3 = \begin{pmatrix} -3 \\ -4 \end{pmatrix}, \
      \ve x = \begin{pmatrix} 0 \\ 0 \end{pmatrix}, \ \text{and} \ t = 0.
    \]
    Reception times are $(t_1,t_2,t_3) = (4,5,5)$. Here another
    solution of~\eqref{1eqAbsolute} is $t' = 28/3$ and
    $\ve x' = -\frac{4}{3} \left(\begin{smallmatrix} 1 \\ 0 \end
      {smallmatrix}\right)$. This is spurious, since $t_i - t' <
    0$. In fact, if the signal had been sent from~$\ve x'$ at
    time~$t'$, it would have arrived at times
    $(\frac{44}{3},\frac{41}{3},\frac{41}{3})$. The absolute
    differences between the arrival times are the same, but the
    sequence is reversed.%
    \comment{
      Magma-code:

      n:=2;
      ai:=[[4,0],[-3,4],[-3,-4]];
      Atilde:=Matrix(n+1,n+1,[[Rationals()!2*ai[i][j]: j in [1..n]]
      cat [-1]: i in [1..n+1]]);
      Determinant(Atilde);
      // nonzero
      ti:=[Rationals()!4,5,5];
      [ti[i]^2 - &+[ai[i][j]^2: j in [1..n]]: i in [1..n+1]];
      // all zero, so these really are the lengths
      ti:=Matrix(n+1,1,ti);
      Btilde := (Transpose(Atilde)*Atilde)^(-1)*Transpose(Atilde);
      ua:=2*Btilde*ti;
      u:=Submatrix(ua,[1..n],[1]);
      alpha:=ua[n+1][1];
      u,alpha;
      // [-1/7]
      // [0]
      // -64/7
      u21:=&+[u[i,1]^2: i in [1..n]] - 1;
      u21;
      // -48/49
      ts:=alpha/u21;
      ts;
      // 28/3
      [ti[i,1] gt ts: i in [1..n+1]];
      // all false, so the t_i - t' are negative -> spurious
      xs:=ts*u;
      xs;
      // [-4/3]
      // [   0]
      [&+[(ai[i][j]-xs[j,1])^2: j in [1..n]] eq (ti[i,1] - ts)^2: i in
        [1..n+1]];
      // All true
      // Arrival times if the signal had been sent at time ts:
      [ts + AbsoluteValue(ti[i,1]-ts): i in [1..n+1]];
      // [ 44/3, 41/3, 41/3 ]
    }%
  \item \label{1exLateration4} It can happen that the signal arrives at the same time
    $t_1 = \cdots = t_m$ at all sensors, so the source has the same
    distance from all of them. Intuition tells us that there can only
    be one such point, and again the mathematics bears this out. In
    fact, the equation $\tilde B \tilde A = I_{n+1}$
    before~\eqref{1eqTildeA} and the definition of $\tilde A$ imply
    that all rows from $\tilde{B}$, except for the last one, have
    coefficient sum~$0$. Therefore $\ve u = 0$, so indeed there is
    only one solution for~$\ve x$. Of the solutions for~$t$, one is
    spurious.
  \item \label{1exLateration5} Assume that~$\ve x$ is collinear with  two of the~$\ve a_i$, say
    $\ve a_1$ and~$\ve a_2$, but does not lie between them. Choosing
    the coordinate system suitably, we may assume~$\ve x = \ve 0$ and
    $t=0$. Then our assumption means $\ve a_1 = \lambda \ve a_2$ with
    $1 \ne \lambda > 0$. Now $\tilde B \tilde A = I_{n+1}$ implies
    that the last row of $\tilde B$ is
    $(\frac{-1}{\lambda-1},\frac{\lambda}{\lambda-1},0 \upto 0)$. We
    have
    $t_1 = \lVert\ve a_1\rVert = \lambda \lVert\ve a_2\rVert = \lambda
    t_2$, so $\alpha = 0$ by~\eqref{1eqUV}. Since $\ve v = \ve 0$ and
    $\beta = 0$, the quadratic equation becomes $t^2 = 0$.
  \item \label{1exLateration6} Here is an example where the
    coefficient of~$t^2$ becomes~$0$:
    \[
      \ve a_1 = \begin{pmatrix} 1 \\ 0 \end{pmatrix}, \
      \ve a_2 = \begin{pmatrix} -1 \\ 0 \end{pmatrix}, \
      \ve a_3 = \begin{pmatrix} 3 \\ 4 \end{pmatrix}, \
      \ve x = \begin{pmatrix} 0 \\ 0 \end{pmatrix}, \ \text{and} \ t = 0.
    \]
    The computation shows $\ve u = \left(\begin{smallmatrix} 0 \\
        1\end{smallmatrix}\right)$,  $\ve v = \left(\begin{smallmatrix} 0 \\
        0\end{smallmatrix}\right)$, $\alpha = -2$, and $\beta = 0$. So
    here the quadratic equation degenerates to $2 t = 0$.
    \comment{
      Magma-code:

      n:=2;
      ai:=[[1,0],[-1,0],[3,4]];
      Atilde:=Matrix(n+1,n+1,[[Rationals()!2*ai[i][j]: j in [1..n]]
      cat [-1]: i in [1..n+1]]);
      Determinant(Atilde);
      // nonzero
      ti:=[Rationals()!1,1,5];
      [ti[i]^2 - &+[ai[i][j]^2: j in [1..n]]: i in [1..n+1]];
      // all zero, so these really are the lengths
      ti:=Matrix(n+1,1,ti);
      Btilde := (Transpose(Atilde)*Atilde)^(-1)*Transpose(Atilde);
      ua:=2*Btilde*ti;
      u:=Submatrix(ua,[1..n],[1]);
      alpha:=ua[n+1][1];
      u,alpha;
      // [0]
      // [1]
      // -2
      u21:=&+[u[i,1]^2: i in [1..n]] - 1;
      u21;
      // 0
      // So the equation becomes 2 t = 0.
    }%
    This can be interpreted as follows: As the positions~$\ve a_i$
    approach the values given above, the alternative solution
    $\left(\begin{smallmatrix} t' \\ \ve x'\end{smallmatrix}\right)$
    of~\eqref{1eqRank4}, apart from the solution
    $\left(\begin{smallmatrix} t \\ \ve x\end{smallmatrix}\right) =
    \left(\begin{smallmatrix} 0 \\ \ve 0\end{smallmatrix}\right)$,
    tends to infinity. For example, taking
    $\ve a_3 = \left(\begin{smallmatrix} 3 \\
        3.99\end{smallmatrix}\right)$ and leaving~$\ve a_1$
    and~$\ve a_2$ unchanged leads to $t' \approx -1991$ and
    $\ve x' \approx \left(\begin{smallmatrix} 0 \\
        -1992\end{smallmatrix}\right)$.  \exend
    \comment{
      Magma-code:

      n:=2;
      ai:=[[1,0],[-1,0],[3,3.99]];
      Atilde:=Matrix(n+1,n+1,[[2*ai[i][j]: j in [1..n]]
      cat [-1]: i in [1..n+1]]);
      Determinant(Atilde);
      // nonzero
      ti:=[1,1,SquareRoot(ai[3][1]^2+ai[3][2]^2)];
      [ti[i]^2 - &+[ai[i][j]^2: j in [1..n]]: i in [1..n+1]];
      // all zero, so these really are the lengths
      ti:=Matrix(n+1,1,ti);
      Btilde := (Transpose(Atilde)*Atilde)^(-1)*Transpose(Atilde);
      ua:=2*Btilde*ti;
      u:=Submatrix(ua,[1..n],[1]);
      alpha:=ua[n+1][1];
      u,alpha;
      // [7.87599296662528609843919288905E-30]
      // [1.00050215683406730060441540339]
      // -2.00000000000000000000000000005
      u21:=&+[u[i,1]^2: i in [1..n]] - 1;
      u21;
      // 0.00100456582962060170330369035948
      ts:=alpha/u21;
      ts;
      // -1990.90984485839798474450427518
      [ti[i,1] gt ts: i in [1..n+1]];
      // all true, so the t_i - t' are nonnegative
      xs:=ts*u;
      xs;
      // [-1.56803919352897820431119774634E-26]
      // [-1991.90959384300549849073110292]
      [&+[(ai[i][j]-xs[j,1])^2: j in [1..n]] - (ti[i,1] - ts)^2: i in
        [1..n+1]];
      // Differences are negligible
    }%
  \end{enumerate}
  \renewcommand{\exend}{}
\end{ex}

Having seen from \cref{1exLateration}\ref{1exLateration1} that even
with~$5$ sensors such that no~$4$ of them are coplanar, it may happen that the matrix $A$,
defined in~\eqref{1eqA} is not invertible, we wonder how often this
happens. The following result says that under mild hypotheses, the
answer is ``very rarely'', i.e., almost certainly the
formula~\eqref{1eqB} can be applied for finding~$\ve x$ and~$t$.
Notice that $A$ is formed with times~$t_i$ given by~\eqref{1eqBasic},
so, because of the last column of $A$, its rank only depends on the
positions~$\ve a_i$ and~$\ve x$.

\begin{prop} \label{1pAInvertible}%
  Assume we have $m = 5$ sensors such that
  \begin{equation} \label{1eqCondition}%
    \det
    \begin{pmatrix}
      \varepsilon_1 \cdot \lVert\ve a_1\rVert & \ve a_1^T & 1 \\
      \vdots & \vdots & \vdots \\
      \varepsilon_5 \cdot \lVert\ve a_5\rVert & \ve a_5^T & 1
    \end{pmatrix} \ne 0 \quad \text{for all} \quad \varepsilon_1 \upto
    \varepsilon_5 = \pm 1.
  \end{equation}
  Then the set of all $\ve x \in \RR^3$ such that the matrix $A$ has
  rank $< 5$ is contained in a $2$-dimensional subvariety of $\RR^3$.
\end{prop}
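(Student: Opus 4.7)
The obstacle is that $\det(A)$, viewed as a function of $\ve x$, is not a polynomial because the entries $t_i - t = \lVert\ve a_i - \ve x\rVert$ involve square roots. My plan is to construct a genuine polynomial $P(\ve x) \in \RR[\ve x]$ whose vanishing locus contains the set in question, and then to use the hypothesis to rule out $P \equiv 0$.

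First I would observe that $\rank(A)$ depends only on~$\ve x$: adding $2t$ times the last column of~$A$ to its first column replaces the entries $-2t_i$ by $-2\lVert\ve a_i - \ve x\rVert$, so $\det(A)$ equals $\det(A')$ where
\[
  A' := \begin{pmatrix}
    -2\lVert\ve a_1 - \ve x\rVert & 2 \ve a_1^T & -1 \\
    \vdots & \vdots & \vdots \\
    -2\lVert\ve a_5 - \ve x\rVert & 2 \ve a_5^T & -1
  \end{pmatrix}.
\]
Expanding along the first column gives $\det(A') = \sum_{i=1}^5 \lVert\ve a_i - \ve x\rVert \cdot M_i$, where the cofactors~$M_i$ depend only on $\ve a_1,\dots,\ve a_5$, not on~$\ve x$.

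Next, I would introduce
\[
  P(\ve x) := \prod_{\varepsilon \in \{\pm 1\}^5} \bigl(\varepsilon_1 \lVert\ve a_1 - \ve x\rVert \cdot M_1 + \cdots + \varepsilon_5 \lVert\ve a_5 - \ve x\rVert \cdot M_5\bigr).
\]
This is a product of~$32$ expressions linear in the quantities $r_i := \lVert\ve a_i - \ve x\rVert$. Flipping the sign of any single $r_j$ merely permutes the factors (via $\varepsilon_j \mapsto -\varepsilon_j$), so $P$ is invariant under $r_j \mapsto -r_j$ for each $j$. Consequently $P$ is a polynomial in $r_1^2,\dots,r_5^2 = \lVert\ve a_1-\ve x\rVert^2,\dots,\lVert\ve a_5-\ve x\rVert^2$, and hence a polynomial in~$\ve x$. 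Since the factor corresponding to $\varepsilon = (+1,\dots,+1)$ is exactly $\det(A')$, we have $\det(A)=0 \Rightarrow P(\ve x)=0$.

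Finally, I would evaluate $P$ at $\ve x = \ve 0$. There each factor equals
\[
  \varepsilon_1 \lVert\ve a_1\rVert M_1 + \cdots + \varepsilon_5 \lVert\ve a_5\rVert M_5,
\]
which, after factoring $-2,2,2,2,-1$ from the columns, is a nonzero scalar multiple of
$\det\bigl(\varepsilon_i \lVert\ve a_i\rVert, \ve a_i^T, 1\bigr)_i$. By hypothesis~\eqref{1eqCondition}, every one of these~$32$ determinants is nonzero, so $P(\ve 0) \neq 0$ and $P$ is not the zero polynomial. The zero set of a nonzero polynomial in $\RR^3$ is contained in a $2$-dimensional algebraic subvariety, completing the proof. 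The delicate step is the sign-invariance argument ensuring $P$ is polynomial; everything else is bookkeeping.
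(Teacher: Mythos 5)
Your proposal is correct and takes essentially the same route as the paper: the paper's proof likewise forms the product of the $32$ determinants $\det\bigl(\varepsilon_i \lVert\ve a_i - \ve x\rVert,\ \ve a_i^T,\ 1\bigr)_{i=1 \upto 5}$ over all sign patterns, observes that this is a polynomial in the coordinates of $\ve x$ vanishing whenever $\rank(A) < 5$, and uses~\eqref{1eqCondition} to conclude it is nonzero at $\ve x = \ve 0$ --- your sign-flip invariance argument simply makes explicit the polynomiality that the paper asserts without detail. One bookkeeping slip: since the last column of $A$ has entries $-1$, you must add $-2t$ (not $2t$) times that column to the first to turn $-2t_i$ into $-2(t_i - t) = -2\lVert\ve a_i - \ve x\rVert$.
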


\begin{proof}
  The function
  \[
    f(\ve x,\ve a_1 \upto \ve a_5) := \prod_{\varepsilon_1 \upto
    \varepsilon_5 = \pm 1} \det
    \begin{pmatrix}
      \varepsilon_1 \cdot \lVert\ve a_1 - \ve x\rVert & \ve a_1^T & 1 \\
      \vdots & \vdots & \vdots \\
      \varepsilon_5 \cdot \lVert\ve a_5 - \ve x\rVert & \ve a_5^T & 1
    \end{pmatrix}
  \]
  is a polynomial in the coefficients of~$\ve x$ and the~$\ve a_i$. If
  $A$ has rank $< 5$, then $f(\ve x,\ve a_1 \upto \ve a_5) = 0$. Our
  hypothesis means that $f(\ve 0,\ve a_1 \upto \ve a_5) \ne 0$, so the
  assertion follows.
\end{proof}

The hypothesis~\eqref{1eqCondition} in \cref{1pAInvertible} looks a
bit messy and lacks geometric content, but is readily verifiable. It would be desirable to have some more geometric conditions under which the assertion of \cref{1pAInvertible} holds. The following conjecture would be the best possible result, since its converse is clearly true.

\begin{conjecture} \label{1cAInvertible}%
  The assertion of \cref{1pAInvertible} holds under the milder
  hypothesis that the~$\ve a_i$ are not coplanar and pairwise
  distinct.
\end{conjecture}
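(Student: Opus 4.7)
The plan is to strengthen the argument in the proof of \cref{1pAInvertible} by replacing the pointwise hypothesis at $\ve x = \ve 0$ with a Baire-category and analytic-continuation argument. The polynomial $f(\ve x) := f(\ve x, \ve a_1 \upto \ve a_5)$ is polynomial in the coefficients of~$\ve x$, and the set where $A$ has rank $< 5$ is contained in $\{f = 0\}$; so it suffices to show that $f$ does not vanish identically on $\RR^3$.

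To analyze $f$, expand each factor along its first column:
\[
  \det \begin{pmatrix} \varepsilon_1 \lVert \ve a_1 - \ve x \rVert & \ve a_1^T & 1 \\ \vdots & \vdots & \vdots \\ \varepsilon_5 \lVert \ve a_5 - \ve x \rVert & \ve a_5^T & 1 \end{pmatrix} = \sum_{i=1}^5 \varepsilon_i c_i \lVert \ve a_i - \ve x \rVert,
\]
where $c_i := (-1)^{i+1} \det M_i$ and $M_i$ is the $4 \times 4$ matrix with rows $(\ve a_j^T, 1)$, $j \ne i$. A standard cofactor argument identifies $(c_1, \ldots, c_5)$ as the coefficients of an affine dependence $\sum c_i \ve a_i = \ve 0$, $\sum c_i = 0$ of the five points. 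By the noncoplanarity hypothesis, the $5 \times 4$ matrix with rows $(\ve a_j^T, 1)$ has rank~$4$, so at least one $c_i$ is nonzero.

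Now suppose for contradiction that $f \equiv 0$ on $\RR^3$. Then the closed sets
\[
  U_\varepsilon := \left\{\ve x \in \RR^3 : h_\varepsilon(\ve x) := \sum_{i=1}^5 \varepsilon_i c_i \lVert \ve a_i - \ve x \rVert = 0\right\}, \quad \varepsilon \in \{\pm 1\}^5,
\]
cover $\RR^3$. By the Baire category theorem, some $U_\varepsilon$ has nonempty interior; since $h_\varepsilon$ is real-analytic on the connected open set $\RR^3 \setminus \{\ve a_1, \ldots, \ve a_5\}$, the identity theorem forces $h_\varepsilon \equiv 0$ on this whole set, and hence on $\RR^3$ by continuity.

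To derive a contradiction, pick any index $i$ with $c_i \ne 0$. By pairwise distinctness, each summand $\lVert \ve a_j - \ve x \rVert$ with $j \ne i$ is real-analytic in a neighborhood of $\ve a_i$, whereas $\lVert \ve a_i - \ve x \rVert$ has a conical singularity there: its gradient is the unit vector pointing in the direction of approach, and therefore has no limit at $\ve a_i$. Consequently $h_\varepsilon$ cannot be $C^1$ at $\ve a_i$, contradicting $h_\varepsilon \equiv 0$. The step that I would expect to require the most care is the identification of the~$c_i$ with the affine-dependence coefficients, but this is routine; the remainder of the argument uses both hypotheses essentially, namely noncoplanarity to ensure some $c_i \ne 0$ and pairwise distinctness to localize the singularity at $\ve a_i$.
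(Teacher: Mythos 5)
Your argument is correct as far as I can check, and it is worth stressing that it does something the paper itself does not: the statement is left as a \emph{conjecture} there --- the authors only report a computational (radical-ideal, MAGMA) verification of the $2$-dimensional analogue and say explicitly that their computations in the $3$-dimensional case ran into an impasse --- whereas your analytic argument, if written up carefully, settles the conjecture in every dimension $n \ge 2$ at once. The individual steps hold up: the cofactor expansion along the first column is valid, and noncoplanarity, i.e.\ rank~$4$ of the $5\times 4$ matrix with rows $(\ve a_j^T,1)$, does force some cofactor $c_i \ne 0$ (note that the affine-dependence identification, which you single out as the delicate step, is not actually needed --- only the nonvanishing of some $c_i$ enters the proof); if $f \equiv 0$ then the $32$ closed sets $U_\varepsilon$ cover $\RR^3$, so one has interior; each $h_\varepsilon$ is real-analytic on $\RR^3 \setminus \{\ve a_1,\ldots,\ve a_5\}$, which is \emph{connected} for $n \ge 2$, so the identity theorem upgrades vanishing on an open set to $h_\varepsilon \equiv 0$; and restricting to a line through $\ve a_i$ with $c_i \ne 0$ gives $\varepsilon_i c_i\,|s|$ plus a function analytic at $s=0$ (here pairwise distinctness keeps the other four summands analytic at $\ve a_i$), which cannot vanish identically. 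A reassuring consistency check, which you should add: the paper states that the $1$-dimensional case of the conjecture is \emph{false}, and your proof breaks in dimension~$1$ at exactly one place --- $\RR \setminus \{a_1,a_2,a_3\}$ is disconnected, so the identity theorem only propagates vanishing within one component; indeed, on an unbounded interval the affine dependence $\sum_i c_i a_i = 0$, $\sum_i c_i = 0$ makes $h_\varepsilon$ vanish identically for the appropriate sign pattern, so $f \equiv 0$ there, in agreement with \cref{1rDim}. So where the paper's method buys a machine-checkable certificate in dimension~$2$, your route buys a conceptual, dimension-uniform proof of the full statement; in a final version, do spell out that the product defining $f$ over all sign patterns is a genuine polynomial function of $\ve x$ whose value at each point is the product of the $h_\varepsilon(\ve x)$, since that is what licenses both the covering step and the passage from ``$f$ not identically zero'' to containment of the rank-deficiency locus in a $2$-dimensional subvariety as in \cref{1pAInvertible}.
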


We managed to prove the conjecture in the $2$-dimensional case by
considering the polynomial~$f$ used in the above proof as a polynomial
in the coordinates of~$\ve x$ as main variables, and forming the ideal
generated by the coefficients. A computation in the computer algebra
system MAGMA~[\citenumber{magma}] then shows that the equations that
express that the~$\ve a_i$ do {\em not} satisfy the hypothesis of the
conjecture all lie in the radical of this ideal. However, in the
$3$-dimensional case our computations ran into an impasse. The
$1$-dimensional case of the conjecture is false.

\begin{rem} \label{1rBancroft}%
  The first, and most cited, algebraic solution of the GPS-problem
  appears to have been given by \mycite{Bancroft:1985}
  in~\citeyear{Bancroft:1985}. Let us point out some differences
  between his approach and ours.
  \begin{itemize}
  \item Bancroft reaches a quadratic equation even if there are more
    than~$4$ sensors, and does not offer an explicit formula such
    as~\eqref{1eqB}.
  \item For reaching the equations~\eqref{1eqRank4} in the case that
    $A$ has rank $< 5$, we assume that $\tilde A$, defined
    in~\eqref{eqTildeAdef}, has rank~$4$ or, equivalently, that
    the~$\ve a_i$ are not coplanar. On the other hand, Bancroft
    assumes that the matrix
    \[
      \begin{pmatrix}
        \ve a_1^T & t_1 \\
        \vdots & \vdots \\
        \ve a_m^T & t_m
      \end{pmatrix} \in \RR^{m \times 4}
    \]
    has rank~$4$. But whether this is the case does not only depend on
    the positions~$\ve a_i$ and~$\ve x$, but also on~$t$, which in
    Bancroft's situation is the bias between the clocks. For example,
    if there are~$m = 4$ noncoplanar sensors, there is always a value
    for~$t$ such that the above matrix has determinant~$0$.
  \item There is no proof that Bancroft's quadratic equation does not
    degenerate. \remend
  \end{itemize}
  \renewcommand{\remend}{}
\end{rem}

\section{Relations and matching} \label{sMatching}

We consider the same situation as in the previous section: A source at
position $\ve x \in \RR^3$ emits a signal at time~$t$, which is
received by~$m$ sensors at positions $\ve a_1 \upto \ve a_m \in \RR^3$
and at times~$t_1 \upto t_m$. The unit of time is chosen such that the
signal propagates with speed~$1$, so 
$\lVert\ve a_i - \ve x\rVert = t_i - t$. The~$\ve a_i$ and~$t_i$ are
considered as known, and \cref{sMultilateration} was about how to
find~$\ve x$ and~$t$ from them. Now if there are~$5$ sensors or more,
then, according to the following result, there are algebraic relations
between the known quantities.

\begin{theorem} \label{2tRelations}%
  In the above situation, write $d_{i,j} := \lVert\ve a_i - \ve
  a_j\rVert$ and $t_{i,j} := t_i - t_j$. Then the matrix
  \[
    D = \bigl(t_{i,j}^2 - d_{i,j}^2\bigr)_{i,j = 1 \upto m} =
    \begin{pmatrix}
      0 & t_{1,2}^2 - d_{1,2}^2 & \cdots & t_{1,m}^2 - d_{1,m}^2 \\
      t_{2,1}^2 - d_{2,1}^2 & 0 & \cdots & t_{2,m}^2 - d_{2,m}^2 \\
      \vdots & \vdots & \ddots & \vdots \\
      t_{m,1}^2 - d_{m,1}^2 & t_{m,2}^2 - d_{m,2}^2 & \cdots & 0
    \end{pmatrix}
    \in \RR^{m \times m}
  \]
  has rank $\le 4$. So if $m \ge 5$ we have the relation
  $\det(D) = 0$, and if $m > 5$ all $5 \times 5$-minors are zero.
\end{theorem}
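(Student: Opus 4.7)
The plan is to exhibit $D$ explicitly as a Gram-like matrix for a symmetric bilinear form on a $4$-dimensional space, which forces the rank bound.

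First I would expand each entry using the fundamental identity $\lVert\ve a_i - \ve x\rVert = t_i - t$. Writing $t_{i,j} = (t_i - t) - (t_j - t)$ and $\ve a_i - \ve a_j = (\ve a_i - \ve x) - (\ve a_j - \ve x)$, a direct expansion gives
\[
  t_{i,j}^2 - d_{i,j}^2 = 2\bigl[(\ve a_i - \ve x)^T(\ve a_j - \ve x) - (t_i - t)(t_j - t)\bigr],
\]
since the ``diagonal'' squared terms $\lVert\ve a_i - \ve x\rVert^2$ and $(t_i - t)^2$ cancel against one another by~\eqref{1eqBasic}. In particular the diagonal entry $(i=j)$ vanishes, matching the stated form of $D$.

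Next I would introduce the column vectors
\[
  \tilde{\ve b}_i := \begin{pmatrix} t_i - t \\ \ve a_i - \ve x \end{pmatrix} \in \RR^4
\]
and the matrix $L = \diag(-1,1,1,1)$ from \cref{sMultilateration}. The computation above is exactly
\[
  t_{i,j}^2 - d_{i,j}^2 = 2\,\tilde{\ve b}_i^T L\,\tilde{\ve b}_j.
\]
Assembling the $\tilde{\ve b}_i$ into a matrix $B = (\tilde{\ve b}_1\ \cdots\ \tilde{\ve b}_m) \in \RR^{4\times m}$ gives the factorisation $D = 2 B^T L B$.

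Now the rank bound is automatic: $\rank(D) \le \rank(B) \le 4$, since $B$ has only $4$ rows. Hence every $5\times 5$-minor of $D$ vanishes, which gives $\det(D)=0$ when $m=5$ and the stated minor relations when $m>5$. There is no real obstacle here; the only thing worth checking carefully is that the ``spacetime'' signature of $L$ is what makes the bilinear form $(-1,1,1,1)$ match the sign pattern $t_{i,j}^2 - d_{i,j}^2$, and that the diagonal entries actually vanish (which is exactly where the hypothesis $\lVert\ve a_i - \ve x\rVert = t_i - t$ is consumed).
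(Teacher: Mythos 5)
Your proof is correct, and it takes a genuinely different route from the paper's. The paper proves a general proposition on Cayley--Menger matrices (\cref{2pCayleyMenger}) over an arbitrary quadratic space, establishing the exact rank $r+2$ via a factorization $C = F^T E F$; it then specializes to Minkowski space $\RR^4$ with $q(t,\ve u) = t^2 - \lVert\ve u\rVert^2$, observes that \eqref{1eqBasic} forces all ``distances'' $\delta_{0,i}$ from the event vector $\ve v_0 = (t, \ve x^T)^T$ to vanish, and extracts $\rank(D) \le 4$ from $\rank(C) \le 6$ by peeling off the two bordering rows and columns. You exploit the same Minkowski geometry, but you notice that the null condition $q(\tilde{\ve b}_i) = 0$ on your spacetime displacement vectors kills the diagonal terms in the expansion of $q(\tilde{\ve b}_i - \tilde{\ve b}_j)$, so the bordered matrix can be bypassed entirely: your identity $t_{i,j}^2 - d_{i,j}^2 = 2\,\tilde{\ve b}_i^T L\, \tilde{\ve b}_j$ is correct, the factorization $D = 2 B^T L B$ with $B \in \RR^{4 \times m}$ follows, and $\rank(D) \le \rank(B) \le 4$ is immediate. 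Your version is shorter and self-contained, it generalizes to $\RR^n$ (giving $\rank(D) \le n+1$) just as readily, it makes transparent that the entire content of the hypothesis is that the $m$ spacetime points $(t_i, \ve a_i^T)^T$ lie on the light cone of a single event, and since only squares enter, it works verbatim under the weaker hypothesis \eqref{1eqAbsolute}. What the paper's detour buys is the exact-rank statement for Cayley--Menger matrices over any field of characteristic $\ne 2$ and any quadratic form, without assuming the $\delta_{0,i}$ vanish --- a result of independent interest that the authors explicitly record for completeness and that situates the theorem within classical distance geometry.
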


We have formulated \cref{2tRelations} in the $3$-dimensional case for the sake of simplicity. But it actually holds in any dimension~$n$, saying that $\rank(D) \le n+1$.

For the proof we will use the Cayley-Menger matrix. The following
result about its rank is well known (see \mycite{Cayley:1841}) in the
case of Euclidean spaces, but we need a more general version given by
the following proposition. We will not need the exact value of the
rank, but include it for the sake of completeness.

\begin{prop} \label{2pCayleyMenger}%
  Let $\ve v_0 \upto \ve v_m \in V$ be vectors in a Euclidean space
  or, more generally, in a vector space over a field of characteristic
  $\ne 2$ equipped with a quadratic form~$q$. Set
  $\delta_{i,j} := q(\ve v_i - \ve v_j)$, which in the special case of
  a Euclidean space is the squared distance between~$\ve v_i$
  and~$\ve v_j$. Then the {\bf Cayley-Menger matrix}
  \[
    C := \begin{pmatrix}
      0 & 1 & 1 & 1 & \cdots & 1 \\
      1 & 0 & \delta_{0,1} & \delta_{0,2} & \cdots & \delta_{0,m} \\
      1 & \delta_{1,0} & 0 & \delta_{1,2} & \cdots & \delta_{1,m}  \\
      1 & \delta_{2,0} & \delta_{2,1} & 0 & \cdots & \delta_{2,m} \\
      \vdots & \vdots & \vdots & \vdots & \ddots & \vdots \\
      1 & \delta_{m,0} & \delta_{m,1} & \delta_{m,2} & \cdots & 0
    \end{pmatrix} \in K^{(m+2) \times (m+2)}
  \]
  has rank $\le \dim(V) + 2$. More precisely, if~$r$ is the rank
  of~$q$ restricted to the subspace $U \subseteq V$ generated by
  $\ve v_1 - \ve v_0 \upto \ve v_m - \ve v_0$, then
  $\rank(C) = r + 2$. Notice that $r \le \dim(U)$, with equality in
  the special case of a Euclidean space. Also notice that $\dim(U)$ is
  equal to the dimension of the {\em affine} subspace generated by
  $\ve v_0 \upto \ve v_m$.
\end{prop}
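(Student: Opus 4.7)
My plan is to bring the Cayley-Menger matrix $C$ into a block-diagonal form via rank-preserving row and column operations, from which the rank can be read off directly. The key algebraic ingredient is the polar expansion $\delta_{i,j} = q(\ve v_i) + q(\ve v_j) - 2 b(\ve v_i, \ve v_j)$, where $b$ is the symmetric bilinear form polarizing $q$, well defined because $\operatorname{char}(K) \ne 2$. Since both $C$ and $U$ are invariant under a common translation of all the $\ve v_i$, I would first normalize to $\ve v_0 = \ve 0$, so that $U = \operatorname{span}(\ve v_1 \upto \ve v_m)$.

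The reduction is carried out in two rounds. In the first, for each $i \ge 1$ I would subtract $q(\ve v_{i-1})$ times the $0$-th row of $C$ from the $i$-th row, and then do the analogous column operations. A routine calculation using the polar expansion shows that this replaces the lower-right $(m+1) \times (m+1)$ block of $C$ by $-2 B$, where $B = \bigl(b(\ve v_i, \ve v_j)\bigr)_{i,j=0,\ldots,m}$, while leaving the bordering row and column of ones intact. Because $\ve v_0 = \ve 0$, the row and column of $B$ corresponding to $\ve v_0$ vanish, so the corresponding row and column of the transformed matrix each contain only a single nonzero entry (coming from the border). In the second round I would use these two pivots to clear the remainder of the border, yielding the direct sum decomposition
\[
  \begin{pmatrix} 0 & 1 \\ 1 & 0 \end{pmatrix} \oplus (-2 B'), \quad B' := \bigl(b(\ve v_i, \ve v_j)\bigr)_{i,j=1,\ldots,m},
\]
whose rank equals $2 + \rank(B')$.

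The assertion then follows from the standard fact that the rank of the Gram matrix $B'$ equals the rank of $b$ (equivalently $q$, as $\operatorname{char}(K) \ne 2$) restricted to $\operatorname{span}(\ve v_1 \upto \ve v_m) = U$, which is $r$ by definition. Hence $\rank(C) = r + 2$. The Euclidean special case $r = \dim(U)$ is immediate because a positive definite form is nondegenerate on every subspace. The only step requiring genuine care is the bookkeeping through the two rounds of operations and the verification that the second round actually produces a clean block decoupling; no deeper obstacle arises.
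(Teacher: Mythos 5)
Your proof is correct, and the bookkeeping you flag as the delicate part does check out: round one leaves the bordering row and column of ones intact precisely because the $(0,0)$ entry of $C$ is zero, and round two decouples cleanly because $b(\ve v_0,\cdot)=0$ after the normalization $\ve v_0 = \ve 0$. Your route differs from the paper's in mechanics, though both rest on the same two ingredients: the polarization identity $\delta_{i,j} = q(\ve v_i) + q(\ve v_j) - 2b(\ve v_i,\ve v_j)$ (the only place where $\operatorname{char}(K)\ne 2$ enters) and the rank-$2$ hyperbolic block $\left(\begin{smallmatrix} 0&1\\1&0\end{smallmatrix}\right)$. The paper first replaces $V$ by $U$ and picks a basis, so that $q$ is given by an $n\times n$ symmetric matrix $A$ of rank $r$, and then exhibits a single rectangular factorization $C = F^T E F$ with $E = \left(\begin{smallmatrix}0&1\\1&0\end{smallmatrix}\right)\oplus(-2A)$ of size $n+2$ and $F \in K^{(n+2)\times(m+2)}$ of full row rank; the value $\rank(C) = r+2$ then falls out because $F$ is surjective and $F^T$ is injective. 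You instead stay at size $m+2$ and perform a congruence by elementary operations, arriving at $\left(\begin{smallmatrix}0&1\\1&0\end{smallmatrix}\right)\oplus(-2B')$ with $B'$ the $m\times m$ Gram matrix of the \emph{spanning} vectors $\ve v_1 \upto \ve v_m$, and then invoke the standard fact $\rank(B') = r$. Be aware that this fact, in the generality you need it (a spanning set rather than a basis, and a possibly degenerate form), is proved by writing $B' = W^T A W$ with $W = (\ve v_1 \upto \ve v_m)$ surjective onto $U$ — i.e., by exactly the surjectivity/injectivity argument the paper applies to $F^T E F$ — so the two proofs differ mainly in where the spanning-set-versus-basis bookkeeping happens. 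What your version buys is a more elementary, self-contained reduction that makes the congruence class of $C$ explicit; what the paper's buys is brevity, since once $E$ and $F$ are written down a single block multiplication verifies everything at once.
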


\begin{proof}
  We only need to show $\rank(C) = r + 2$ in the more general case of
  a quadratic space over a field $K$. Replacing each~$\ve v_i$ by
  $\ve v_i - \ve v_0$ does not change $C$ or $U$, so by doing this we
  may assume $\ve v_0 = \ve 0$. Then $U$ is generated (as a vector
  space) by $\ve v_1 \upto \ve v_m$, and we may replace $V$ by
  $U$. Now $V$ is generated by the $\ve v_i$ and in particular
  finite-dimensional. By choosing a basis of $V$ we may then replace
  $V$ by $K^n$. Then~$q$ is given by $q(\ve v) = \ve v^T A \ve v$ for
  $\ve v \in V = K^n$, with $A \in K^{n \times n}$ a symmetric matrix
  of rank~$r$. $A$ also defines the bilinear form
  $\langle\cdot,\cdot\rangle$ belonging to~$q$. After these
  reductions, the main part of the proof rests on the following matrix
  computation.

  With
  \[
    E:= \left(
      \begin{array}{c|c}
        \begin{array}{cc}
          0 & 1 \\
          1 & 0
        \end{array} & 0 \\
        \hline
        \vphantom{3^{3^{3^3}}} 0 & -2 A
      \end{array}\right) \in K^{(n+2) \times (n+2)}, \ %
    F := \left(
      \begin{array}{c|c}
        I_2 & \begin{array}{ccc}
                \delta_{0,1} & \cdots & \delta_{0,m} \\
                1 & \cdots & 1
              \end{array} \\
        \hline
        \vphantom{3^{3^{3^3}}} 0 & 
                                   \begin{array}{ccc}
                                     \ve v_1 & \cdots & \ve v_m
                                   \end{array}
      \end{array}\right) \in K^{(n+2) \times (m+2)}
  \]
  we have
  \begin{multline*}
    F^T E F = \left(
      \begin{array}{c|c}
        I_2 & \vphantom{3_{3_{3_3}}} 0 \\
        \hline
        \begin{array}{cc}
          \delta_{0,1} & \vphantom{3^{3^{3^3}}} 1 \\
          \vdots & \vdots \\
          \delta_{0,m} & 1
        \end{array} &
                      \begin{array}{c}
                        \ve v_1^T \\
                        \vdots \\
                        \ve v_m^T
                      \end{array}
      \end{array}\right) %
    \left(
      \begin{array}{c|c}
        \begin{array}{cc}
          0 & 1 \\
          1 & 0
        \end{array} & \begin{array}{ccc}
                        1 & \cdots & \hphantom{A} 1 \\
                        \hphantom{a} \delta_{0,1} & \cdots &
                                                             \hphantom{-2}
                                                             \delta_{0,m}
                      \end{array} \\
        \hline
        \vphantom{3^{3^{3^3}}} 0 &  \begin{array}{ccc}
                                      -2 A \ve v_1 & \cdots & -2 A \ve v_m
                                    \end{array}
      \end{array}\right) \\ %
    = \left(
      \begin{array}{c|c}
        \begin{array}{cc}
          0 & \hphantom{2} 1 \hphantom{2} \\
          1 & \hphantom{2} 0 \hphantom{2}
        \end{array} & \begin{array}{ccc}
                        1 & \cdots & 1 \\
                        \delta_{0,1} & \cdots & \delta_{0,m}
                      \end{array} \\
        \hline
        \begin{array}{cc}
          \vphantom{3^{3^{3^3}}} 1 & \delta_{1,0} \\
          \vdots & \vdots \\
          1 & \delta_{m,0}
        \end{array} & B
      \end{array}\right),
  \end{multline*}
  where the $(i,j)$-th entry of the matrix $B$ is
  \[
    \delta_{0,i} + \delta_{0,j} - 2 \ve v_i^T A \ve v_j = \langle\ve
    v_i,\ve v_i\rangle + \langle\ve v_j,\ve v_j\rangle - 2 \langle\ve
    v_i,\ve v_j\rangle = \langle\ve v_i - \ve v_j,\ve v_i - \ve
    v_j\rangle = \delta_{i,j}.
  \]
  So $F^T E F = C$. Since the~$\ve v_i$ span $V = K^n$, $F$ has
  rank~$n + 2$, so the linear map $K^{m+2} \to K^{n+2}$ given by $F$
  is surjective. Likewise, the map given by $F^T$ is injective, and
  the image of the map given by $E$ has dimension equal to
  $\rank(A) + 2 = r + 2$. It follows that the map given by $C$ has an
  image of dimension~$r + 2$, which is our claim.
\end{proof}

\begin{proof}[Proof of \cref{2tRelations}]
  We apply \cref{2pCayleyMenger} with $V = \RR^4$ being Minkowski
  space, so for
  $\ve v = \left(\begin{smallmatrix} t \\ \ve
      u\end{smallmatrix}\right) \in V$ with $t \in \RR$,
  $\ve u \in \RR^3$ the quadratic form is
  $q(\ve v) = t^2 - \lVert\ve u\rVert^2$, with $\lVert\cdot\rVert$ the
  usual Euclidean norm. For $i = 1 \upto m$, set
  $\ve v_i := \left(\begin{smallmatrix} t_i \\ \ve
      a_i\end{smallmatrix}\right)$, and also set
  $\ve v_0 := \left(\begin{smallmatrix} t \\ \ve
      x\end{smallmatrix}\right)$ (the time and place of
  emission). Then the equation $\lVert\ve a_i - \ve x\rVert = t_i - t$
  implies $\delta_{0,i} = q(\ve v_0 - \ve v_i) = 0$ for all~$i$. So
  the Cayley-Menger matrix becomes
  \[
    C = \left(
      \begin{array}{c|c}
        \begin{array}{cc}
          0 & 1  \\
          1 & 0
        \end{array} & \begin{array}{ccc}
                        1 & \cdots & 1 \\
                        0 & \cdots & 0
                      \end{array} \\
        \hline
        \begin{array}{cc}
          \vphantom{3^{3^3}} 1 & 0 \\
          \vdots & \vdots \\
          1 & 0
        \end{array} & D
      \end{array}\right),
  \]
  with $D$ as defined in the theorem. Since $\rank(C) \le 6$ by
  \cref{2pCayleyMenger}, we get $\rank(D) \le 4$.
\end{proof}

By \cref{2tRelations} there is a relation between the reception times
of a signal coming from a single emission event. If there are multiple
emission events (from different source locations and/or at different
times), this relation can be used to match those reception times that
come from the same event. Matching reception times can then be used to
determine the time and place of emission, making use of the methods
from \cref{sMultilateration}. \cref{2aDetect} makes this idea precise.

\begin{alg}
  \caption{Detect source positions and emission times of multiple
    emission events} \label{2aDetect} \mbox{}%
  \begin{description}
  \item[Input] For $i = 1 \upto 5$, a set $\mathcal T_i$ containing
    the points in time when the $i$th sensor received a signal. The
    units of time and distance should be chosen such that signals
    travel with speed~$1$. The positions
    $\ve a_1 \upto \ve a_5 \in \RR^3$ of the sensors need to be known.
  \end{description}
  \begin{algorithmic}[1]
    \STATE \label{2aDetect1} Let
    $d_{i,j} := \lVert\ve a_i - \ve a_j\rVert$ be the distances
    between the sensors. Set $\mathcal E := \emptyset$. The detected
    emission events will be collected as pairs $(t,\ve x)$ in the set
    $\mathcal E$.
    \FOR{$(t_1,t_2,t_3,t_4,t_5) \in \mathcal T_1 \times \mathcal T_2
      \times \mathcal T_3 \times \mathcal T_4 \times \mathcal
      T_5$} \label{1aDetect2}%
    \STATE \label{2aDetect3} Set up the matrix
    $D = \bigl((t_i - t_j)^2 - d_{i,j}^2\bigr)_{i,j = 1 \upto 5}$.%
    \IF{$\det(D) = 0$} \label{1aDetect4}%
    \STATE \label{2aDetect5} Use~\cref{1tLateration}, with the current
    $t_1 \upto t_5$ and $\ve a_1 \upto \ve a_5$ as input, to compute
    the emission time~$t$ and the source position~$\ve x$. Include
    $(t,\ve x)$ in the set $\mathcal E$. In the unlikely event
    that~\cref{1tLateration} yields two solutions and neither can be
    discarded (as spurious or from other context), include both.%
    \ENDIF%
    \ENDFOR%
    \STATE \label{2aDetect8} {\bf Output} the set $\mathcal E$ of
    detected emission events.%
  \end{algorithmic}
\end{alg}

\begin{rem} \label{3rSpurious}%
In fact, \cref{2aDetect} does more than just matching reception times of signals. It also discards spurious signals registered by sensors. By this we mean erroneous registrations of signals, or registrations of signals that originate very near to a sensor and are irrelevant since they cannot be perceived by other sensors. Indeed, it is almost impossible for such a spurious signal to satisfy the relation $\det(D) = 0$ together with other "legitimate" reception times.
\end{rem}

In a later paper we will study the behavior (and modifications) of the
algorithm in situations where the input data is
inexact because of measurement errors.

\section{Wall detection by echoes} \label{sEchoes}

\cref{2tRelations} guarantees that the relation $\det(D) = 0$ always holds if the
signals received at times $t_1 \upto t_5$ come from the same emission
event. So all events for which a signal is received by every sensor
will be detected. But it is possible that the determinant becomes~$0$
even if the signals do not come from the same event. For instance, if
$t_1 \upto t_4$ do come from the same event, a different source may
send a signal at such an unlucky time that it is received by the fifth
sensor at a time~$t_5$ that happens to make the equation $\det(D) = 0$
come true. This can happen no matter where sources and sensors are
positioned. In fact, with the signal reception times as the only available information, any algorithm would be tricked into making a false match if some signal arrives at an unlucky time.

We will now restrict our attention to the case in which all emission
events share the same emission time (which is still unknown to the
sensors). This happens when the emission events are in fact echoes of
a single sound emission bouncing off from various walls (i.e., flat
surfaces). In fact, in the ray acoustics approximation the received
echoes virtually come from the so-called {\em mirror points}, i.e.,
the points obtained by reflecting the original source position at the
walls. If an echo from a wall is received by all five sensors (or
microphones in the acoustic case), then \cref{2aDetect} computes the
mirror point, and it is easy to find the wall position from this. With
the restriction to simultaneous (virtual) emission events, it becomes
less likely that \cref{2aDetect} produces a mismatch ($\det(D) = 0$ even
though the signals come from different mirror points) and so
erroneously detects a wall which is not really there (often called a
{\em ghost wall}).

So we can be hopeful that, in contrast to the situation with different
emission times, the choice of the sensor positions may preclude ghost
walls. To give this a name, we say that the sensors are in a {\em good
  position} if \cref{2aDetect} produces no ghost walls. Whether this
is true clearly also depends on the coordinates of the mirror points,
but not on the time of the sound emission, since only differences of
reception times go into the matrix $D$. Even more hopeful, we say that
{\em almost all} positions are good if the bad positions are contained
in a lower-dimensional subvariety of the configuration space
$(\RR^3)^5$ of all possible microphone positions. Intuitively ``almost
all'' can be thought of as ``with probability one.''

\begin{theorem} \label{3tGood}%
  Consider a given room, by which we understand an arrangement of
  walls, which may include ceilings, floors, and sloping walls. Assume
  there is a loudspeaker at a given position in the room. Now five
  microphones are positioned in the room. Then almost all loudspeaker
  positions are good, meaning that from a single sound emitted by the
  loudspeaker, \cref{2aDetect} detects all walls from which an echo is
  received by every microphone, but it detects no ghost walls.
\end{theorem}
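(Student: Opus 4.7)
The plan is to show that the set of bad microphone configurations in $(\RR^3)^5$, those admitting a ghost wall, is contained in a proper Zariski-closed subset, whence it has Lebesgue measure zero. Fix the room and the loudspeaker, so the finite set of mirror points $\ve x_1, \ldots, \ve x_N$ (reflections of the loudspeaker at the walls) is fixed. Since all virtual emissions happen at the same time~$t$, any assignment $\kappa = (k_1, \ldots, k_5) \in \{1, \ldots, N\}^5$ yields putative reception times $t_i = t + \lVert \ve a_i - \ve x_{k_i} \rVert$, and~$t$ cancels in the entries of the matrix~$D$ from \cref{2tRelations}. A ghost wall can only be produced when \cref{2aDetect} accepts a \emph{mismatched} assignment, i.e., one whose~$k_i$ are not all equal. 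For such~$\kappa$ define
\[
B_\kappa = \bigl\{ (\ve a_1, \ldots, \ve a_5) \in (\RR^3)^5 \colon \det(D_\kappa) = 0 \bigr\},
\]
where $D_\kappa$ is~$D$ evaluated on the reception times coming from~$\kappa$. There are only finitely many~$\kappa$, so it suffices to show that each~$B_\kappa$ is contained in a proper subvariety.

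The entries of $D_\kappa$ involve Euclidean distances and are therefore not polynomial in the~$\ve a_i$. To obtain a polynomial condition, I would imitate the device from the proof of \cref{1pAInvertible} and form
\[
F_\kappa(\ve a_1, \ldots, \ve a_5) := \prod_{\varepsilon_1, \ldots, \varepsilon_5 = \pm 1} \det\bigl( D_\kappa^{(\varepsilon)} \bigr),
\]
where $D_\kappa^{(\varepsilon)}$ is obtained by replacing each distance $\lVert \ve a_i - \ve x_{k_i} \rVert$ in~$D_\kappa$ by $\varepsilon_i \lVert \ve a_i - \ve x_{k_i} \rVert$. Because the product is invariant under all sign flips, it becomes, after expansion, a polynomial in the coordinates of the~$\ve a_i$, with the~$\ve x_{k_i}$ as parameters; and $\det(D_\kappa) = 0$ forces $F_\kappa = 0$.

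The main and most delicate step is to verify that $F_\kappa$ is not identically zero for every mismatched~$\kappa$. The assertion must hold for \emph{every} room, so one cannot simply invoke genericity of the mirror points. I would group the mismatched~$\kappa$ by their coincidence pattern (the partition of $\{1, \ldots, 5\}$ induced by $i \mapsto k_i$) and, for each pattern, either exhibit an explicit microphone configuration making $\det(D_\kappa) \ne 0$ or certify non-vanishing by a Gr\"obner-basis computation treating the~$\ve x_{k_i}$ as symbolic parameters. The main obstacle is exactly here: one must rule out that some exotic configuration of the mirror points --- for instance, several of them collinear, or a repeated pair in special relation with some~$\ve a_i$ --- forces~$F_\kappa$ to vanish identically for some~$\kappa$. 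This is where I expect the methods of computational commutative algebra to enter, and where the difficulties described by the authors as surprisingly different from those of the companion paper should appear.

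Once $F_\kappa \not\equiv 0$ is known for every mismatched~$\kappa$, the inclusion $B_\kappa \subseteq \{F_\kappa = 0\}$ places each~$B_\kappa$ in a proper subvariety of~$(\RR^3)^5$. Hence the bad set $B = \bigcup_\kappa B_\kappa$ is a finite union of proper subvarieties, itself a proper subvariety, with complement of full Lebesgue measure. Thus almost all microphone configurations are good in the sense of the theorem.
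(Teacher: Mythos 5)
Your reduction is structurally the same as the paper's: pass to the mirror points, note that the common emission time cancels in the matrix $D$, symmetrize over sign choices to turn $\det(D)$ into a polynomial in the microphone coordinates, and conclude by observing that a finite union of proper subvarieties of $(\RR^3)^5$ is negligible. (A cosmetic difference: the paper fixes $\varepsilon_5 = 1$ and takes only $16$ factors, which suffices because the determinant is invariant under a global sign flip.)

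The genuine gap is that the step you yourself flag as ``the main and most delicate'' one --- showing that $F_\kappa$ is not identically zero in the $\ve a_i$ for \emph{every} tuple of mirror points that are not all equal --- is essentially the entire content of the paper's proof, and neither of your two suggested routes goes through as stated. Exhibiting one explicit microphone configuration per coincidence pattern cannot work: the witnessing configuration must be allowed to depend on the (arbitrary) mirror points, since for any fixed microphone matrix the vanishing locus of $F_\kappa$ in the mirror-point coordinates is a hypersurface that certainly meets the non-diagonal tuples. A direct Gr\"obner-basis certification with the mirror points as symbolic parameters is the right idea in principle, but it is computationally out of reach: the relevant polynomial is homogeneous of degree $160$ in the $15$ microphone variables plus the mirror-point parameters, and the authors state that even writing it down is infeasible. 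What the paper actually proves is a single universal Claim --- if $f(\ve a_1 \upto \ve a_5, \ve s_1 \upto \ve s_5)$ vanishes for all microphone matrices, then $\ve s_1 = \cdots = \ve s_5$ --- after first normalizing $\ve s_1 = \ve 0$ and rotating so that the mirror-point matrix is upper triangular with only $9$ free parameters $y_1 \upto y_9$. It then considers the ideal $L$ generated by the coefficients of $F$ viewed as a polynomial in the $x_{i,j}$, and shows that the only real zero of $L$ is the origin; even this requires a carefully engineered, recorded sequence of partial specializations of the $x_{i,j}$ (reducing modulo an auxiliary ideal encoding the square roots, accumulating generators of $L$, and replacing sums of squares by the individual variables since only real solutions matter). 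Without some such argument, your proposal establishes only the easy half of the theorem.
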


\begin{proof}
  We are given a finite set $\mathcal S \subset \RR^3$ of mirror
  points, obtained by reflecting the given position of the loudspeaker
  at the various walls. From now on we can forget about the walls and
  the loudspeaker, since the signals will be received by the
  microphones as if they were all simultaneously emitted from the
  mirror points. The microphone positions can be represented as the
  columns of a matrix
  $M = (\ve a_1 \upto \ve a_5) \in \RR^{3 \times 5}$. We can thus
  speak of good or bad matrices $M$. We will also say that $M$ is {\bf
    very good} if the following is true: For any five points
  $\ve s_1 \upto \ve s_5 \in \mathcal S$, the relation
  \begin{equation} \label{3eqVery}%
    \det\Bigl(\bigl(\lVert\ve s_i - \ve a_i\rVert - \lVert\ve s_j -
    \ve a_j\rVert\bigl)^2 - \lVert\ve a_i - \ve a_j\rVert^2\Bigr)_{i,j
      = 1 \upto 5} = 0
  \end{equation}
  only holds if $\ve s_1 = \ve s_2 = \ve s_3 = \ve s_4 = \ve s_5$. A
  very good position is a good one, since for a quintuple
  $(t_1 \upto t_5) \in \mathcal T_1 \times \cdots \times \mathcal T_5$
  considered in \cref{2aDetect} there are points
  $\ve s_i \in \mathcal S$ such that
  $t_i - t = \lVert\ve s_i - \ve a_i\rVert$ (with~$t$ the time of
  simultaneous emission). So the determinant considered in the
  algorithm is just the one in~\eqref{3eqVery}. If this is zero,
  the hypothesis of ``very good'' implies that the $\ve s_i$ are all
  the same point $\ve s \in \mathcal S$. So indeed if the algorithm
  includes a point into the set $\mathcal E$, this will be an actual
  mirror point, meaning that $M$ is good.

  The main difficulty of the proof arises from the fact that the
  determinant in~\eqref{3eqVery} is not a polynomial in the
  coordinates of the~$\ve a_i$ and the~$\ve s_i$, because the norms
  involve square roots. To circumvent this problem, we form the
  product
  \[
    f(\ve a_1 \upto \ve a_5,\ve s_1 \upto \ve s_5) := \!\!\!\!
    \prod_{\substack{\varepsilon_1 \upto \varepsilon_4 = \pm 1 \\
        \epsilon_5 = 1}} \!\!\!\! \det\Bigl(\bigl(\varepsilon_i
    \lVert\ve s_i - \ve a_i\rVert - \varepsilon_j \lVert\ve s_j - \ve
    a_j\rVert\bigl)^2 - \lVert\ve a_i - \ve a_j\rVert^2\Bigr)_{i,j = 1
      \upto 5},
  \]
  which is easily seen to be a polynomial in the coordinates of its
  arguments. Let us say that $M$ is {\bf excellent} if for any
  $\ve s_1 \upto \ve s_5 \in \mathcal S$ the relation
  $f(\ve a_1 \upto \ve a_5,\ve s_1 \upto \ve s_5) = 0$ implies that
  the~$\ve s_i$ are all equal. So ``excellent'' implies ``very good''
  and ``good''. Now the set
  \[
    \mathcal U_{\ve s_1 \upto \ve s_5} := \bigl\{M = (\ve a_1 \upto
    \ve a_5) \in \RR^{3 \times 5} \mid f(\ve a_1 \upto \ve a_5,\ve s_1
    \upto \ve s_5) \ne 0\bigr\}
  \]
  is Zariski open in $\RR^{3 \times 5}$, and
  \[
    \mathcal U := \bigcap_{\substack{\ve s_1 \upto \ve s_5 \in S\
        \text{such that} \\ \text{not all}\ \ve s_i\ \text{are
          equal}}} \mathcal U_{\ve s_1 \upto \ve s_5}
  \]
  is the set of excellent matrices. So if we can show that
  $\mathcal U_{\ve s_1 \upto \ve s_5} \ne \emptyset$ for all
  $\ve s_1 \upto \ve s_5 \in \mathcal S$ that are not all equal, by
  the Zariski openness the theorem follows. Since we have no control
  over the given set $\mathcal S$, we need to show the nonemptiness
  for any five points $\ve s_i \in \RR^3$ that are not all equal, and
  this also suffices. Equivalently, we need to prove the following.

  \noindent {\bf Claim.} If $\ve s_1 \upto \ve s_5 \in \RR^3$ are
  points such that
  \[
    f(\ve a_1 \upto \ve a_5,\ve s_1 \upto \ve s_5) = 0 \quad \text{for
      all} \quad M = (\ve a_1 \upto \ve a_5) \in \RR^{3 \times 5},
  \]
  then $\ve s_1 = \ve s_2 = \cdots = \ve s_5$.

  Having reduced the proof to the claim, we can forget about the
  situation of the theorem. In principle, the claim could be proved by
  explicitly forming the polynomial~$f$, regarding the variables for
  the $\ve a_i$ as main variables, extracting the coefficients (which
  are polynomials in the $\ve s_i$-variables), and showing that the
  ideal generated by the coefficients defines the variety given by
  $\ve s_1 = \ve s_2 = \cdots = \ve s_5$.  Unfortunately, $f$ has 15
  variables and is homogeneous of degree~160. The first step towards
  making the computation feasible is choosing suitable Cartesian
  coordinates as follows. The vector $\ve s_1$ can be taken as the
  origin of the coordinate system. This turns the matrix $S := (\ve s_1
  \upto \ve s_5) \in \RR^{3 \times 5}$ into $S = (\ve 0,\ve s_2 \upto
  \ve s_5)$. We can now apply QR-decomposition, i.e., write
  \begin{equation} \label{3eqS}%
    S = Q \cdot
    \begin{pmatrix}
      0 & b_1 & b_2 & b_3 & b_4 \\
      0 & 0 & b_5 & b_6 & b_7 \\
      0 & 0 & 0 & b_8 & b_9
    \end{pmatrix} =: Q B
  \end{equation}
  with $Q \in \SO(3)$. So using the columns of $Q$ as a new basis of
  $\RR^3$, $S$ becomes the above upper triangular matrix $B$:
  $(\ve s_1 \upto \ve s_5) = S = B$.

  Form the matrices
  \[
    X =
    \begin{pmatrix}
      x_{1,1} & \cdots & x_{1,5} \\
      \vdots & & \vdots \\
      x_{3,1} & \cdots & x_{3,5}
    \end{pmatrix} \quad \text{and} \quad
    Y = (Y_{i,j}) =
    \begin{pmatrix}
      0 & y_1 & y_2 & y_3 & y_4 \\
      0 & 0 & y_5 & y_6 & y_7 \\
      0 & 0 & 0 & y_8 & y_9
    \end{pmatrix}
  \]
  with $x_{i,j}$ and~$y_i$ indeterminates. With additional
  indeterminantes $z_1 \upto z_5$, form the ideal
  \[
    J := \Bigl(z_j^2 - \sum_{i=1}^3 (Y_{i,j} -x_{i,j})^2 \bigl| j = 1
    \upto 5\Bigr),
  \]
  with the idea that the $z_j$ stand for $\lVert\ve s_i - \ve
  a_i\rVert$. Modulo $J$, the product
  \begin{equation} \label{3eqF}%
    \prod_{\substack{\varepsilon_1 \upto \varepsilon_4 = \pm 1 \\
        \epsilon_5 = 1}} \!\!\!\! \det\left((\varepsilon_i z_i -
      \varepsilon_j z_j)^2 - \sum_{k=1}^3 (x_{k,i} -
      x_{k,j})^2\right)_{i,j = 1 \upto 5}
  \end {equation}
  reduces to a polynomial $F(x_{1,1} \upto x_{3,5},y_1 \upto y_9)$
  which does not involve the~$z_j$. If we specialize the variables in
  the matrix $X$ to the entries in a matrix
  $M = (\ve a_1 \upto \ve a_5) = (a_{i,j}) \in \RR^{3 \times 5}$ and
  the variables in $Y$ to the matrix $B$ in~\eqref{3eqS}, we obtain
  \[
    F(a_{1,1} \upto a_{3,5},b_1 \upto b_9) = f(\ve a_1 \upto \ve
    a_5,\ve s_1 \upto \ve s_5).
  \]
  So to prove the above claim and thus the theorem, we need to form
  $F(x_{1,1} \upto x_{3,5},y_1 \upto y_9)$, regard the $x_{i,j}$ as
  main variables and consider the ideal
  $L \subseteq \RR[y_1 \upto y_9]$ generated by the coefficients. Then
  we need to show that $L$ has $y_1 = \cdots = y_9 = 0$ as the only
  real solution, which says that all~$\ve s_i$ are zero and hence
  equal.

  Alas, even after reducing
  the number of variables by our choice of coordinates, computing the
  polynomial $F$ is still utterly impossible. What we did instead was
  setting almost all of the variables $x_{i,j}$ to zero and compute
  the product~\eqref{3eqF} with these specializations, always reducing
  modulo $J$. This turns out to be possible in many cases, and
  extracting coefficients gave us {\em some} generators of $L$. Doing
  this for many choices of specialized variables provides ever more
  generators of $L$. Each time, we also reduced modulo the generators
  of $L$ already known, which is permissible and accelerates the
  computation. When we found a sum of squares of variables in the
  ideal, we substituted this by the variables themselves since only
  real solutions need to be considered.

  With this technique, using random specializations of variables, we
  eventually arrived at an ideal whose only solution is the origin
  $y_i = 0$. We recorded exactly which sequence of specializations led
  to this result and from this produced a deterministic, reproducible
  procedure for verifying the claim. All computations were done in
  MAGMA~[\citenumber{magma}].
\end{proof}

\begin{rem*}
\cref{3tGood} gives the theoretical justification for a procedure that detects walls by solving the pseudo-range multilateration problem for each wall. The following alternative method comes to mind. Since the sound traveling directly from the loudspeaker to the microphones always arrives first, before any echoes, the very first signals registered by the microphones must be the ones coming directly from the loudspeaker. Therefore these can be used, without any matching process, to determine the emission time~$t$ by multilateration. Since the echoes virtually come from the mirror points and are emitted at the same time~$t$ which is now known, the methods from \mycite{DPWLV1} or \mycite{Boutin:Kemper:2019} can then be used for the wall detection.

While this alternative would presumably work in many cases, it has some drawbacks. For one thing, obstacles may in some cases prevent one or more microphones from hearing the direct signal from the loudspeaker. Secondly, and perhaps more importantly, a spurious signal may be registered by one or more of the microphones before the true signal from the loudspeaker arrives. In this case the alternative method would mistake this spurious signal as the direct signal. Therefore the multilateration would produce a drastically wrong time of emission, and all subsequent wall detections, based on this erroneous time, would become false.
\end{rem*}

\bibliographystyle{mybibstyle} \bibliography{bib}

\end{document}